\newtheorem{Th}{Theorem}
\newtheorem{Lem}{Lemma}
\begin{document}

\thispagestyle{empty}

\title[]{Resonances in nonlinear systems with a decaying chirped-frequency excitation and noise}

\author[O.A. Sultanov]{Oskar A. Sultanov}

\address{
Institute of Mathematics, Ufa Federal Research Centre, Russian Academy of Sciences, Chernyshevsky street, 112, Ufa 450008 Russia;\\
Chebyshev Laboratory, St. Petersburg State University,
14th Line V.O., 29, Saint Petersburg 199178 Russia
}
\email{oasultanov@gmail.com}

%\thanks{\it \today}

\maketitle

{\small
\begin{quote}
\noindent{\bf Abstract.} 
The influence of multiplicative white noise on the resonance capture of strongly nonlinear oscillatory systems under chirped-frequency excitations is investigated. It is assumed that the intensity of the perturbation decays polynomially with time, and its frequency grows according to a power low. Resonant solutions with a growing amplitude and phase, synchronized with the excitation, are considered. The persistence of such a regime in the presence of stochastic perturbations is discussed. In particular, conditions are described that guarantee the stochastic stability of the resonant modes on infinite or asymptotically large time intervals. The technique used is based on a combination of the averaging method, stability analysis and construction of stochastic Lyapunov functions. The proposed theory is applied to the Duffing oscillator with a chirped-frequency excitation and noise.

 \medskip

\noindent{\bf Keywords: }{damped perturbation, chirped-frequency, resonance, phase-locking, stochastic stability, Lyapunov function}

\medskip
\noindent{\bf Mathematics Subject Classification: }{34F15, 34C15, 34E10, 34C29}
%34F15  	Resonance phenomena for ordinary differential equations involving randomness
%	34C15  	Nonlinear oscillations and coupled oscillators for ordinary differential equations
%	34C29  	Averaging method for ordinary differential equations
%	34E10  	Perturbations, asymptotics of solutions to ordinary differential equations

\end{quote}
}
{\small

\section*{Introduction}
The present paper is devoted to the study of resonant phenomena in nonlinear pendulum-type systems. Resonance is usually referred to as the effect of increasing the energy of a system under the influence of the oscillating driving. This phenomenon is widely applicable in various fields: from acoustics and optics to electronics and mechanics (see, for example,~\cite{RS16,MKSS18}). The persistent amplification of the amplitude occurs if the frequency of the driving is close to the natural frequency of the system. In the case of non-isochronous systems, perturbations with a chirped frequency turn out to be very effective~\cite{LF09,GMetal17,LF19etal}. In this case, the excitation frequency changes continuously with time. Resonant capture under small, slowly varying excitation with a chirped frequency has been studied in~\cite{LK08,OK19,AK20,OS21}. In this paper, the presence of a small parameter is not assumed and a special class of chirped perturbations with a decaying intensity is considered. 

Note that the influence of decaying perturbations on autonomous systems have been studied in many papers. In some cases, such perturbations can not disrupt the autonomous dynamics~\cite{RB53,LM56,LDP74}. However, in the general case this cannot be guaranteed: the global properties of solutions to perturbed and unperturbed systems can differ significantly (see, for example,~\cite{HRT94,OS21IJBC}). Bifurcations in asymptotically autonomous systems have been studied in~\cite{LRS02,KS05,MR08,OS22Non}. The effect of decaying oscillatory perturbations with an asymptotically constant frequency on nonlinear autonomous systems in the plane was studied in~\cite{OS21DCDS,OS21JMS}, where the dynamics near the equilibrium in resonance and non-resonance cases was discussed. The long-term asymptotic behaviour for solutions of similar but linear systems were investigated in~\cite{PN06,BN10}. The decaying perturbations with chirped frequency were considered in~\cite{OS23DCDSB,OS22JMS}, where the conditions for the resonant capture far from the equilibrium were described. In this paper, we study the stability of such resonant regimes with respect to multiplicative stochastic perturbations of white noise type. 

It is well known that even small stochastic perturbations can cause the trajectories of dynamical systems to leave any bounded domain~\cite{FW98}. Many papers (see, for example,~\cite{AMR08,SS10,ZWG12,BRR15}) have studied the effect of autonomous stochastic perturbations on dynamical systems. Fading stochastic perturbations of scalar autonomous systems were considered in~\cite{AGR09,ACR11,KT13}. Bifurcations and asymptotic regimes for solutions in the vicinity of the equilibrium of near-Hamiltonian systems with decaying noise were discussed in~\cite{OS22IJBC,OS23SIAM}. However, the influence of stochastic perturbations on the resonant capture far from the equilibrium has not been studied earlier. This is the subject of the present paper.

The paper is organized as follows. In Section~\ref{Sec1}, the statement of the problem is given, the class of perturbations is described, and a motivating example is presented. Preliminaries and auxiliary results that will be used below are contained in Section~\ref{Sec2}. The main results are presented in Section~\ref{MR}. The justification is provided in the subsequent sections. In particular, in Section~\ref{Sec4} we prove the auxiliary results on the properties of a truncated system in the amplitude-angle variables. In Section~\ref{Sec5} we construct the averaging transformation that simplifies the perturbed system in the first asymptotic terms at infinity in time. Then in Section~\ref{Sec6}, we consider a truncated system obtained from the simplified one by dropping the diffusion terms, and discuss the existence and stability of different long-term asymptotic regimes. In Section~\ref{Sec7}, we prove the persistence of the resonant mode in the full system by constructing suitable stochastic Lyapunov functions. In Section~\ref{Sex}, the proposed theory is applied to the Duffing oscillator with various chirped perturbations and multiplicative noise. The paper concludes with a brief discussion of the results obtained.

\section{Problem statement}\label{Sec1}
Consider the system of It\^{o} stochastic differential equations
\begin{gather}\label{ps}
d\begin{pmatrix} x_1\\ x_2\end{pmatrix} = 
\begin{pmatrix} x_2\\ -U'(x_1)+t^{-\alpha} Q\left(x_1,x_2,S(t)\right)\end{pmatrix}dt + \begin{pmatrix} 0\\ t^{-\gamma}\mu \sigma(x_1,x_2,S(t))\end{pmatrix}dw(t)
\end{gather}
as $t\geq t_0\geq 1$ with $\alpha,\gamma,\mu\in\mathbb R_+$ and a polynomial potential 
\begin{gather}\label{Ucond}
	U(x)\equiv \frac{x^{2h+2}}{2h+2}+\sum_{i=0}^{2h+1}u_i x^i, \quad u_i\in\mathbb R, \quad h\in\mathbb Z_+, 
\end{gather}
where $w(t)$ is a Weiner process on a probability space $(\Omega,\mathfrak A,\mathbb P)$ and the function $S(t)\equiv s t^{\beta+1}$ with $\beta,s\in\mathbb R_+$ corresponds to the phase of the excitation. It is assumed that the functions $Q(x_1,x_2,S)$ and $\sigma(x_1,x_2,S)$, defined for all $(x_1,x_2,S)\in\mathbb R^3$, are infinitely differentiable, do not depend on $\omega\in\Omega$ and have the following form:
\begin{gather}\label{Qform}
\begin{split}
& Q(x_1,x_2,S)\equiv \sum_{i=0}^p\sum_{j=0}^l Q_{i,j}(S) x_1^i x_2^j, \quad l,p\in\mathbb Z, \quad 0\leq l\leq p\leq 2h+1, \\
& \sigma(x_1,x_2,S)\equiv \sum_{i=0}^n\sum_{j=0}^m \sigma_{i,j}(S) x_1^i x_2^j, \quad n,m\in\mathbb Z, \quad 0\leq n\leq p, \quad 0\leq m\leq l,
\end{split}
\end{gather}
where the coefficients $Q_{i,j}(S)$ and $\sigma_{i,j}(S)$ are $2\pi$-periodic with respect to $S$. Thus, \eqref{ps} can be viewed as a pendulum-type system with a chirped-frequency excitation and multiplicative noise. The parameter $\mu$ with the decaying function $t^{-\gamma}$ are responsible for the noise intensity. 
 
From~\cite{OS23DCDSB} it follows that if $\mu=0$, then the deterministic system \eqref{ps} may have a family of solutions with unlimitedly growing amplitude $|x_1(t)|+|x_2(t)|\to \infty$ as $t\to\infty$. This behaviour is associated with a resonance capture phenomenon. In this paper, we assume that $\mu\neq 0$ and study the effect of a noise on the stability of such solutions. 

As an example, consider the perturbed Duffing oscillator
\begin{gather}\label{Ex}
dx_1=x_2\,dt, \quad dx_2=\left(x_1-x_1^3+t^{-\alpha}  \mathcal Q x_1^p \cos S(t)\right) dt+ t^{-\gamma} \mu x_1^n \cos S(t)\, dw(t),
\end{gather}
with $\mathcal Q ={\hbox{\rm const}}$. It can easily be checked that system \eqref{Ex} has the form of \eqref{ps} with $U(x)\equiv x^4/4-x^2/2$, $h=1$, $l=m=0$, $Q(x_1,x_2,S)\equiv  \mathcal Q   x_1^p \cos S$ and $\sigma(x_1,x_2,S)\equiv x_1^n \cos S$. Note that all trajectories of the corresponding limiting autonomous system 
\begin{gather*}
\frac{dx_1}{dt}=x_2, \quad \frac{dx_2}{dt}=x_1-x_1^3
\end{gather*}
 are bounded (see~Fig.~\ref{pp}). Moreover, the solutions with $H(x_1(t),x_2(t))\equiv E>0$, where $H(x_1,x_2)\equiv x_1^4/4+(x_2^2-x_1^2)/2$, correspond to non-isochronous oscillations with a period $\tilde T(E)=\mathcal O(E^{-1/4})$ as $E\to\infty$. Numerical analysis of system \eqref{Ex} with $\mu=0$ shows that under certain conditions on the parameters of the chirped perturbation, the resonant solutions with an unboundedly growing amplitude $\rho(t)=[H(x_1(t),x_2(t))]^{1/4}$ take place (see~Fig.~\ref{fex0}, a, black curves). There are also non-resonant solutions with limited amplitude (see~Fig.~\ref{fex0}, a, grey curves). If $\mu\neq 0$, the stochastic perturbations may disrupt the resonant capture (see~Fig.~\ref{fex0}, b). 

Thus, our goal is to find conditions under which the capture into the resonance persists in the stochastic system \eqref{ps} with a high probability.

\begin{figure}
\centering
\includegraphics[width=0.4\linewidth]{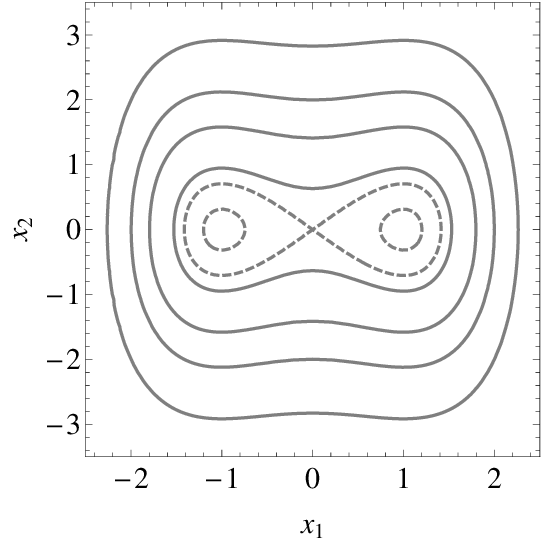}
\caption{\footnotesize The level lines of $H(x_1,x_2)\equiv x_1^4/4+(x_2^2-x_1^2)/2$. The solid curves correspond to $H(x_1,x_2)>0$.} \label{pp}
\end{figure}

\begin{figure}
\centering
\subfigure[ $\mu=0$] {\includegraphics[width=0.4\linewidth]{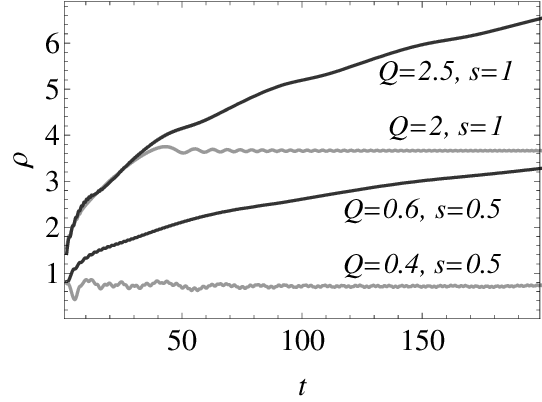}}
\hspace{2ex}
 \subfigure[$\mu=0.2$, $\mathcal Q=2.5$, $s=1$]{\includegraphics[width=0.4\linewidth]{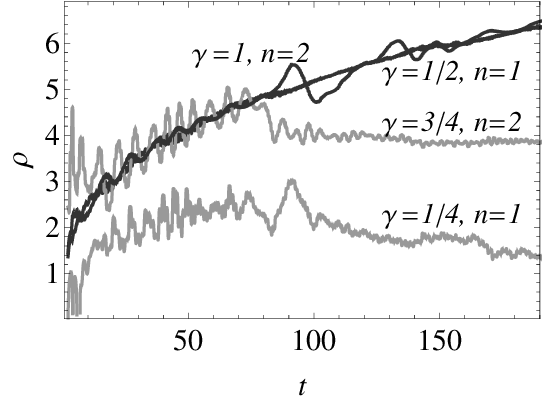}}
\caption{\small The evolution of $\rho(t)\equiv [H(x_1(t),x_2(t))]^{1/4}$ for sample paths of solutions to system \eqref{Ex} with $\alpha=\beta=1/3$ and $p=0$.} \label{fex0}
\end{figure}

\section{Preliminaries and auxiliary results}
\label{Sec2}
Consider the limiting system
\begin{gather}\label{limsys}
\frac{dx_1}{dt}=\partial_{x_2} H(x_1,x_2), \quad \frac{dx_2}{dt}=-\partial_{x_1} H(x_1,x_2), \quad H(x_1,x_2)\equiv \frac{x_2^2}{2}+U(x_1).
\end{gather}
It can easily be checked that there exists $\rho_0 > 0$ such that for every $\rho\geq \rho_0$ the level lines $\{(x_1, x_2) \in\mathbb  R^2 : H(x_1, x_2) = \rho^{2h+2}\}$ determine closed curves on the phase plane $(x_1, x_2)$ and correspond to $T(\rho)$-periodic solutions $\hat x_1(t;\rho)$, $\hat x_2(t;\rho)$ of system \eqref{limsys}, where
\begin{gather*}
T(\rho)\equiv \int\limits_{x_-(\rho)}^{x_+(\rho)} \frac{\sqrt 2 \, d\varsigma}{\sqrt{\rho^{2h+2}-U(\varsigma)}},
\end{gather*}
and $x_-(\rho)<0<x_+(\rho)$ are roots to the equation $U(x)=\rho^{2h+2}$ as $\rho>\rho_0$ such that $U'(x_{\pm}(\rho))\neq 0$. For definiteness, suppose $\hat x_1(0;\rho) = x_+(\rho)$ and $\hat x_2(0;\rho) = 0$. It follows from~\cite[Lemma~4.1]{OS23DCDSB} that
\begin{gather}\label{nuas}
\nu(\rho)\equiv \frac{2\pi}{T(\rho)}, \quad \tilde \nu(\rho)\equiv \nu(\rho) \rho^{-h}\sim \sum_{k=0}^\infty \rho^{-k} \nu_k, \quad \rho\to\infty 
\end{gather}
with constant coefficients $\nu_k={\hbox{\rm const}}$. In particular,  
\begin{gather*}
 \nu_0=\frac{2\pi}{T_0}, \quad 
T_0=(2h+2)^{\frac{1}{2h+2}}\int\limits_{-1}^{1} \frac{\sqrt 2  d\zeta}{\sqrt{1-\zeta^{2h+2}}}, \quad \nu_1=0.
\end{gather*}

Define auxiliary $2\pi$-periodic functions $X_1(\varphi,\rho)=\hat x_1(\varphi/\nu(\rho),\rho)$ and $X_2(\varphi,\rho)=\hat x_2(\varphi/\nu(\rho),\rho)$. Then it follows from~\cite[Lemma~4.2]{OS23DCDSB} that
\begin{gather}\label{X1X2as}
\begin{split}
X_1(\varphi,\rho)& \sim \rho \sum_{k=0}^\infty \rho^{-k} X_{1,k}(\varphi), \\
X_2(\varphi,\rho)&\sim\rho^{h+1} \sum_{k=0}^\infty \rho^{-k} X_{2,k}(\varphi)
\end{split}
\end{gather}
as $\rho\to\infty$ with $2\pi$-periodic coefficients $X_{i,k}(\varphi)$, where the leading terms $X_{1,0}(\varphi)$ and $X_{2,0}(\varphi)$ satisfy the system 
\begin{gather}\label{Xi0}
\nu_0\partial_\varphi X_{1,0}=X_{2,0}, \quad \nu_0\partial_\varphi X_{2,0}=-X_{1,0}^{2h+1}, \quad \frac{X_{1,0}^{2h+2}}{2h+2}+\frac{X_{2,0}^2}{2}=1.
\end{gather}
Note that the series in \eqref{nuas} and \eqref{X1X2as} are assumed to be asymptotic as $\rho \to \infty$ (see, for example,~\cite[\S 1]{MVF89}). We use the functions  $X_1(\varphi,\rho)$, $X_2(\varphi,\rho)$ for rewriting system \eqref{ps} in the amplitude-angle variables $(\rho,\varphi)$: 
\begin{gather}\label{EAtr} 
	x_1=X_1(\varphi,\rho), \quad x_2=X_2(\varphi,\rho). 
\end{gather}
From the identity $H(X_1(\varphi,\rho),X_2(\varphi,\rho))\equiv \rho^{2h+2}$ it follows that 
\begin{gather*}
 \Delta(\rho):=   \begin{vmatrix}
        \partial_\varphi X_1 & \partial_\rho X_1\\
        \partial_\varphi X_2& \partial_\rho X_2
    \end{vmatrix} = \frac{(2h+2)\rho^{2h+1}}{\nu(\rho)}\neq 0, \quad \rho\geq \rho_0.
\end{gather*}
 Hence, the transformation \eqref{EAtr} is invertible for all $\rho\geq \rho_0$ and $\varphi\in[0,2\pi)$. Denote by
\begin{gather}\label{RPhi}
\rho=Y_1(x_1,x_2), \quad \varphi=Y_2(x_1,x_2)
\end{gather}
 the inverse transformation to \eqref{EAtr} and 
define the domain $\mathcal D(\rho_0) = \{(x_1,x_2)\in \mathbb R^2 : H(x_1,x_2) >\rho_0\}$. Then, we have the following:
\begin{Lem}\label{Lem1}
Let assumptions \eqref{Ucond} and \eqref{Qform} hold. Then, for all $(x_1,x_2)\in\mathcal D(\rho_0)$ system \eqref{ps} can be transformed into
\begin{gather}\label{PS}
\begin{split}
& d \rho = F_1(\rho,\varphi,S(t),t)\,dt + t^{-\gamma} \mu c_1(\rho,\varphi,S(t),t)\,dw(t),\\
& d \varphi = \left(\nu(\rho)+F_2(\rho,\varphi,S(t),t)\right) dt + t^{-\gamma} \mu c_2(\rho,\varphi,S(t),t)\,dw(t),
\end{split}
\end{gather}
with $F_i(\rho,\varphi,S,t) \equiv t^{-\alpha} f_i(\rho,\varphi,S)+t^{-2\gamma}  g_i(\rho,\varphi,S)$, $i\in\{1,2\}$, where
\begin{align*}
& f_i(\rho,\varphi,S) \equiv Q(X_1(\varphi,\rho),X_2(\varphi,\rho),S)\partial_{x_2} Y_i(X_1(\varphi,\rho),X_2(\varphi,\rho)), \\ 
& g_i(\rho,\varphi,S) \equiv \frac{\mu^2}{2}\sigma^2(X_1(\varphi,\rho),X_2(\varphi,\rho),S)\partial^2_{x_2}Y_i(X_1(\varphi,\rho),X_2(\varphi,\rho)), \\
& c_i(\rho,\varphi,S) \equiv \sigma(X_1(\varphi,\rho),X_2(\varphi,\rho),S)\partial_{x_2} Y_i(X_1(\varphi,\rho),X_2(\varphi,\rho)).
\end{align*}
Moreover, for every $i\in\{1,2\}$ the functions 
\begin{align*}
&\tilde f_i(\rho,\varphi,S)\equiv f_i(\rho,\varphi,S)\rho^{-(a+2-i)}, \quad\tilde g_i(\rho,\varphi,S)\equiv g_i(\rho,\varphi,S)\rho^{-(2b+2-i)},\\ 
&\tilde c_i(\rho,\varphi,S)\equiv c_i(\rho,\varphi,S)\rho^{-(b+2-i)}
\end{align*} 
have following asymptotic expansions:
\begin{gather}\label{fgAB}
\begin{split}
&\displaystyle \tilde f_i(\rho,\varphi,S)  \sim  \sum_{k=0}^\infty \rho^{-k} f_{i,k}(\varphi,S),  \quad
\displaystyle \tilde g_i(\rho,\varphi,S) \sim   \sum_{k=0}^\infty \rho^{-k} g_{i,k}(\varphi,S), \\
&\displaystyle \tilde c_i(\rho,\varphi,S) \sim  \sum_{k=0}^\infty \rho^{-k} c_{i,k}(\varphi,S)
\end{split}
\end{gather}
as $\rho\to\infty$ with $2\pi$-periodic coefficients $f_{i,k}(\varphi,S)$, $g_{i,k}(\varphi,S)$ and $c_{i,k}(\varphi,S)$. In particular,
\begin{align*}
&\begin{pmatrix} f_{1,0}(\varphi,S) \\ f_{2,0}(\varphi,S)\end{pmatrix}\equiv \frac{\nu_0  Q_{p,l}(S)  }{2(h+1)}(X_{1,0}(\varphi))^p (X_{2,0}(\varphi))^l
\begin{pmatrix} 
\partial_\varphi X_{1,0}(\varphi) \\ 
-X_{1,0}(\varphi)
\end{pmatrix},\\
&\begin{pmatrix} g_{1,0}(\varphi,S) \\ g_{2,0}(\varphi,S)\end{pmatrix}\equiv -\frac{ (\mu \nu_0 \sigma_{n,m}(S))^2  }{8(h+1)^2}(X_{1,0}(\varphi))^{2n}(X_{2,0}(\varphi))^{2m}
\begin{pmatrix} 
h(\partial_\varphi X_{1,0}(\varphi))^2+X_{1,0}(\varphi)\partial^2_\varphi X_{1,0}(\varphi)\\ 
-(h+2) X_{1,0}(\varphi)\partial_\varphi X_{1,0}(\varphi)
\end{pmatrix},\\
&\begin{pmatrix} c_{1,0}(\varphi,S) \\ c_{2,0}(\varphi,S)\end{pmatrix}\equiv \frac{\nu_0 \sigma_{n,m}(S)  }{2(h+1)} (X_{1,0}(\varphi))^{n}(X_{2,0}(\varphi))^{m}
\begin{pmatrix} 
\partial_\varphi X_{1,0}(\varphi) \\ 
-X_{1,0}(\varphi)
\end{pmatrix}.
\end{align*}
\end{Lem}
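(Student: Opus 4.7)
The plan is to apply It\^o's formula to the inverse change of variables \eqref{RPhi} and then feed the asymptotic data \eqref{nuas}, \eqref{X1X2as} into the resulting coefficients. Since the only diffusion in \eqref{ps} enters through the $x_2$-equation, the quadratic variation table gives $(dx_1)^2=dx_1\,dx_2=0$ and $(dx_2)^2 = t^{-2\gamma}\mu^2\sigma^2\,dt$, so only $\partial_{x_2}Y_i$ and $\partial^2_{x_2}Y_i$ appear in It\^o's formula. The autonomous piece $x_2\partial_{x_1}Y_i - U'(x_1)\partial_{x_2}Y_i$ is the Lie derivative of $Y_i$ along the unperturbed flow \eqref{limsys}; by construction of the action--angle variables---the identity $H(X_1(\varphi,\rho),X_2(\varphi,\rho))\equiv \rho^{2h+2}$ makes $\rho$ invariant, while the parametrization $X_i(\varphi,\rho)=\hat x_i(\varphi/\nu(\rho),\rho)$ makes $\varphi$ advance at rate $\nu(\rho)$---this piece equals $0$ for $i=1$ and $\nu(\rho)$ for $i=2$. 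The remaining drift and diffusion terms then reproduce the structure of \eqref{PS} with $f_i = Q\partial_{x_2}Y_i$, $g_i = \tfrac{\mu^2}{2}\sigma^2\partial^2_{x_2}Y_i$ and $c_i = \sigma\partial_{x_2}Y_i$, composed with \eqref{EAtr}.

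To derive \eqref{fgAB}, I would express the partial derivatives of $Y_i$ by inverting the Jacobian of \eqref{EAtr}: with $\Delta(\rho)=(2h+2)\rho^{2h+1}/\nu(\rho)$, Cramer's rule gives $\partial_{x_2}Y_1 = \partial_\varphi X_1/\Delta$ and $\partial_{x_2}Y_2 = -\partial_\rho X_1/\Delta$, and a further application of the chain rule $\partial_{x_2}=\partial_{x_2}Y_1\,\partial_\rho+\partial_{x_2}Y_2\,\partial_\varphi$ (with $\partial_\varphi\Delta=0$) expresses $\partial^2_{x_2}Y_i$ as a sum of three explicit terms in $\partial_\rho^j\partial_\varphi^k X_1$ and $\partial_\rho^j\Delta$. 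Substituting the series \eqref{X1X2as} and \eqref{nuas} yields asymptotic expansions in $\rho^{-1}$ for each of these derivatives, with $2\pi$-periodic coefficients in $\varphi$. Multiplying by the polynomial expressions \eqref{Qform} for $Q(X_1,X_2,S)$ and $\sigma(X_1,X_2,S)$---whose top monomials grow as $\rho^{p+l(h+1)}$ and $\rho^{n+m(h+1)}$ in view of $x_1\sim\rho X_{1,0}$, $x_2\sim\rho^{h+1}X_{2,0}$---collects into precisely the rescaled expansions \eqref{fgAB}; the formal arithmetic of asymptotic power series in this step is classical (cf.~\cite{MVF89}), and $2\pi$-periodicity of every coefficient is preserved throughout.

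The leading coefficients $f_{i,0}$ and $c_{i,0}$ follow immediately: the scalar $\nu_0/(2(h+1))$ comes from $1/\Delta\sim\nu_0/((2h+2)\rho^{h+1})$, the vector part from $(\partial_\varphi X_1,-\partial_\rho X_1)\sim(\rho\partial_\varphi X_{1,0},-X_{1,0})$, and the polynomial factors from the top monomials of $Q$, $\sigma$. The formula for $g_{1,0}$ requires more care: the three contributions $\partial_\varphi X_1\,\partial_\rho\partial_\varphi X_1/\Delta^2$, $-(\partial_\varphi X_1)^2\partial_\rho\Delta/\Delta^3$, and $-\partial_\rho X_1\,\partial^2_\varphi X_1/\Delta^2$ to $\partial^2_{x_2}Y_1$ all carry the same leading power $\rho^{-(2h+1)}$ and, after invoking $\partial_\rho\Delta\sim(2h+2)(h+1)\rho^h/\nu_0$, combine into $-\nu_0^2\bigl[h(\partial_\varphi X_{1,0})^2 + X_{1,0}\partial^2_\varphi X_{1,0}\bigr]/((2h+2)^2\rho^{2h+1})$; multiplication by $(\mu^2/2)\sigma_{n,m}^2 X_{1,0}^{2n}X_{2,0}^{2m}$ then produces the stated expression with denominator $8(h+1)^2$, and $g_{2,0}$ is obtained analogously. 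I expect this bookkeeping of partial cancellations in the second-order It\^o term, rather than the qualitative structure, to be the principal technical obstacle.
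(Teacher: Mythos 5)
Your proposal follows the same route as the paper's proof: apply It\^o's formula to the inverse transformation \eqref{RPhi}, express $\partial_{x_2}Y_i=\partial_\varphi X_1/\Delta$, $\partial_{x_2}Y_2=-\partial_\rho X_1/\Delta$ and $\partial^2_{x_2}Y_i=\Delta^{-1}(\partial_\varphi X_1\,\partial_\rho-\partial_\rho X_1\,\partial_\varphi)\Delta^{-1}(\partial_\varphi X_1,-\partial_\rho X_1)$ via the Jacobian of \eqref{EAtr}, and substitute the expansions \eqref{nuas}, \eqref{X1X2as} together with \eqref{Qform}; your leading-order bookkeeping (including the cancellation producing the factor $h(\partial_\varphi X_{1,0})^2+X_{1,0}\partial^2_\varphi X_{1,0}$ in $g_{1,0}$) is correct and in fact more explicit than the paper's own two-line argument.
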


Note that the limiting system corresponding to \eqref{PS} has the following form: $d\varrho/dt=0$, $d\phi/dt=\nu(\varrho)$. Consider the resonance condition
\begin{gather}\label{rscond}
\nu(\varrho)=\varkappa^{-1} S'(t),
\end{gather}
where $\varkappa\in\mathbb Z_+$ corresponds to the resonance order. 
From \eqref{nuas} it follows that there exists $\tilde \rho_0>\rho_0$ such that $\nu(\varrho)>0$ and $\nu'(\varrho)>0$ for all $\varrho\geq \tilde \rho_0$. Hence, for all $\varkappa\in\mathbb Z_+$ there exists $\hat t_0>t_0$ such that equation \eqref{rscond} has a smooth solution $\varrho_\varkappa(t)\geq \tilde\rho_0$ defined for all $t\geq  \hat t_0$. Define $z(t)\equiv \varrho_\varkappa(t) t^{-{\beta}/{h}}$, then the following asymptotic expansion holds:
\begin{gather*}
 z(t)\sim\sum_{k=0}^\infty z_k t^{-\frac{k \beta}{h}}, \quad t\to\infty, \quad z_k={\hbox{\rm const}}, \quad z_0=\left(\frac{s(\beta+1)}{\nu_0 \varkappa}\right)^{\frac{1}{h}}, \quad z_1=-\frac{\nu_1}{h \nu_0}.
\end{gather*}
Hence, $\varrho_\varkappa(t)\sim t^{{\beta}/{h}} z_0$ as $t\to\infty$. Define the parameters
 \begin{gather}
\nonumber
	a=p+(l-1)(h+1), \quad  b=n+(m-1)(h+1),\\
\nonumber
	\mathcal M_1=-\alpha+\frac{a \beta}{h}  , \quad 
	\mathcal M_2=-2\gamma+\frac{2 b \beta}{h}, \quad 
	\mathcal M=\max\left\{\mathcal M_1,\mathcal M_2\right\}, \\
 \label{ABC} 
	A=\frac{1}{2}\left(\beta-\mathcal M\right), \quad 
	B=\beta+1, \quad 
	C=\frac{1}{2}\left(\beta-\min\left\{\mathcal M_1,\mathcal M_2\right\}\right),
\end{gather}
and consider the following axillary system, which is obtained from \eqref{PS} by dropping the stochastic parts:
\begin{gather}\label{trsys}
\frac{d\varrho}{dt}=F_1(\varrho,\phi,S(t),t), \quad \frac{d\phi}{dt}=\nu(\varrho)+F_2(\varrho,\phi,S(t),t).
\end{gather}

The following lemma describes the necessary condition for the existence of resonant refimes in the truncated system:
\begin{Lem}\label{Lem2}
Let system \eqref{trsys} has solutions with asymptotics $\varrho(t)\sim\varrho_\varkappa(t)$, $\phi(t)\sim \varkappa^{-1}S(t)$ as $t\to\infty$, then 
\begin{gather}\label{ncond}
	-1\leq \mathcal M<\beta.
\end{gather}
\end{Lem}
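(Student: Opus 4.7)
The plan is to derive the two bounds $\mathcal M\ge -1$ and $\mathcal M<\beta$ separately, each by an order-of-magnitude comparison in the corresponding equation of \eqref{trsys} along the assumed asymptotics $\varrho(t)\sim\varrho_\varkappa(t)\sim z_0 t^{\beta/h}$ and $\phi(t)\sim\varkappa^{-1}S(t)$. The first step is to extract, from the asymptotic expansions \eqref{fgAB} together with the bound $\varrho(s)=O(s^{\beta/h})$ implied by $\varrho\sim\varrho_\varkappa$, the pointwise estimates
\begin{gather*}
|F_1(\varrho(s),\phi(s),S(s),s)|\le C_1 s^{\mathcal M+\beta/h},\\
|F_2(\varrho(s),\phi(s),S(s),s)|\le C_2 s^{\mathcal M}
\end{gather*}
along the trajectory for all sufficiently large $s$, with constants depending only on the leading coefficients of \eqref{fgAB}.

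For the lower bound, I would integrate the first equation of \eqref{trsys} from $t_0$ to $t$. The left-hand side $\varrho(t)-\varrho(t_0)$ grows as $z_0 t^{\beta/h}$, whereas the integral of $F_1$ is bounded above by $O(t^{\mathcal M+\beta/h+1})$ (with a logarithmic correction at the critical exponent). In order for these growth rates to balance, we need $\mathcal M+1\ge 0$, that is, $\mathcal M\ge -1$.

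For the upper bound, I would use $\varrho\sim\varrho_\varkappa$ combined with the expansion \eqref{nuas} (which gives $\nu(\rho)=\nu_0\rho^h(1+o(1))$) to conclude $\nu(\varrho(s))=\nu(\varrho_\varkappa(s))(1+o(1))=\varkappa^{-1}S'(s)(1+o(1))$. Integrating the second equation of \eqref{trsys} from $t_0$ to $t$ then yields
\[
\phi(t)=\phi(t_0)+\varkappa^{-1}S(t)(1+o(1))+\int_{t_0}^t F_2(\varrho(s),\phi(s),S(s),s)\,ds,
\]
so that the assumed asymptotic $\phi(t)=\varkappa^{-1}S(t)(1+o(1))$ forces $\int_{t_0}^t F_2\,ds=o(t^{\beta+1})$. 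Comparing this with the a priori integral bound $O(t^{\mathcal M+1})$ coming from the pointwise estimate on $F_2$ yields $\mathcal M<\beta$.

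The main obstacle is the strict inequality in the upper bound: the $O(t^{\mathcal M+1})$ estimate is only an upper bound on the magnitude of $\int F_2\,ds$, and could in principle be improved by oscillatory cancellation between the fast variables $\phi$ and $S$. To rule this out at leading order, I would appeal to the explicit form of $f_{2,0}$ and $g_{2,0}$ given in Lemma~\ref{Lem1}, which shows that along the resonant trajectory $\phi\sim\varkappa^{-1}S$ the slow (resonant) component of $F_2$ provides a nontrivial secular contribution of the expected order, so that the bound on $\int F_2\,ds$ cannot be improved below $t^{\mathcal M+1}$ without violating the identity displayed above.
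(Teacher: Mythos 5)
Your argument is essentially the paper's own proof: the lower bound is obtained there by the same comparison in the amplitude equation (the paper integrates $\bigl|\tfrac{d}{dt}\log\varrho\bigr|\leq D\left(t^{-\alpha}\varrho^{a}+t^{-2\gamma}\varrho^{2b}\right)$, which along $\varrho\sim\varrho_\varkappa$ is exactly your $\mathcal O(t^{\mathcal M})$-type estimate versus the growth $\varrho_\varkappa\sim z_0t^{\beta/h}$), and the upper bound by integrating the phase equation and comparing the contribution of $F_2$, written as $\phi(t)/S(t)-\varkappa^{-1}=\mathcal O(t^{\mathcal M-\beta})$, with the requirement $\phi(t)/S(t)\to\varkappa^{-1}$. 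The cancellation issue you raise at the end is not addressed in the paper either --- its proof simply reads that order estimate as sharp and concludes $\mathcal M<\beta$ --- so your closing genericity discussion goes beyond, rather than diverges from, the published argument.
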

Thus, if the condition \eqref{ncond} is violated, the resonant solutions with unlimitedly growing amplitude and phase, synchronized with pumping, do not arise. We further assume that this condition is satisfied, and study the stability of such solutions with respect to white noise perturbations.

\section{Main results}
\label{MR}
Consider the additional assumption on the parameters
\begin{gather}\label{adas}
3\mathcal M_1-2\mathcal M_2\geq \beta.
\end{gather}
Combining this with \eqref{ncond}, we see that $\mathcal M_1>\mathcal M_2$, $\mathcal M=\mathcal M_1$, $ A=(\beta-\mathcal M_1)/2$, $C=(\beta-\mathcal M_2)/2$, $B\geq 2A>0$ and $C\geq 3A/2$. Let us set 
\begin{gather*}
\langle f(\zeta)\rangle_{\varkappa \zeta}:=\frac{1}{2\pi\varkappa} \int\limits_0^{2\pi\varkappa}f(\zeta)\,d\zeta, \quad
\{ f(\zeta)\}_{\varkappa \zeta}:= f(\zeta)-\langle f(\zeta)\rangle_{\varkappa \zeta}.
\end{gather*}
Then, we have the following:
\begin{Th}\label{Th1}
Let assumptions \eqref{Ucond}, \eqref{Qform}, \eqref{ncond}, and \eqref{adas} hold. Then, for all $\varkappa\in\mathbb Z_+$ there exist $\tilde t_0>t_0$, $\tilde \rho_0\geq \rho_0$, $R_0>0$ and the chain of transformations $(x_1,x_2,t)\mapsto (\rho,\varphi,t)\mapsto (r,\theta,\tau)\mapsto (R,\Theta,\tau)$, 
\begin{gather}
\nonumber x_1(t)=X_1(\varphi(t),\rho(t)), \quad x_2(t)=X_2(\varphi(t),\rho(t)),\\
\label{subs}
\rho(t)=\varrho_\varkappa(t)\left(1+t^{-A} r(\tau)\right), \quad \varphi(t)=\varkappa^{-1}S(t)+\theta(\tau), \quad \tau=\frac{t^{B}}{B}, \\
\nonumber r(\tau)=R(\tau)+\tau^{-\frac{A}{B}}\tilde Z_1(R(\tau),\Theta(\tau),\zeta(\tau),\tau), \quad 
\theta(\tau)=\Theta(\tau)+\tau^{-\frac{A}{B}}\tilde Z_2(R(\tau),\Theta(\tau),\zeta(\tau),\tau),
\end{gather}
with some smooth functions $\tilde Z_i(R,\Theta,\zeta,\tau)$ such that for all $t\geq \tilde t_0$ and $(x_1,x_2)\in \mathcal D(\tilde \rho_0)$ system \eqref{ps} can be transformed into
\begin{gather}\label{veq}
\begin{split}
&dR= \Lambda_1(R,\Theta,\zeta(\tau),\tau)\, d\tau+ \tau^{-\frac{C-A}{B}}\mu \eta_1(R,\Theta,\zeta(\tau),\tau)\,  d\tilde w(\tau),\\
&d\Theta= \Lambda_2(R,\Theta,\zeta(\tau),\tau)\, d\tau+ \tau^{-\frac{C}{B}}\mu \eta_2(R,\Theta,\zeta(\tau),\tau)\,  d\tilde w(\tau),
\end{split}
\end{gather}
where  $\tilde w(\tau)$ is a Wiener process on a probabiluty space $(\Omega,\mathfrak A,\mathbb P)$, $\zeta(\tau)\equiv S ((B\tau)^{\frac{1}{B}})$, 
\begin{gather}\label{Lambda12}\begin{split}
 \Lambda_1(R,\Theta,\zeta,\tau) \equiv & \sum_{K\in\{A,B-A,2A,B,2C-A\}}\tau^{-\frac{K}{B}}\Lambda_{1,K}(R,\Theta,\tau)+\tilde\Lambda_{1}(R,\Theta,\zeta,\tau), \\
\Lambda_2(R,\Theta,\zeta,\tau) \equiv & \sum_{K\in\{A,2A\}}\tau^{-\frac{K}{B}}\Lambda_{2,K}(R,\Theta,\tau)+\tilde\Lambda_{2}(R,\Theta,\zeta,\tau).
\end{split}
\end{gather}
The functions $\Lambda_{i}(R,\Theta,\zeta,\tau)$, $\eta_i(R,\Theta,\zeta,\tau)$, $\tilde Z_i(R,\Theta,\zeta,\tau)$ are $2\pi$-periodic in $\Theta$ and $2\pi\varkappa$-periodic in $\zeta$. Moreover, the following estimates hold: 
\begin{gather*}
\Lambda_{i,K}(R,\Theta,\tau)=  \langle \mathcal F_{i,K}(R,\Theta,\zeta,\tau)\rangle_{\varkappa \zeta}  +\delta_{i,1}\delta_{K,2A} \mathcal O(\tau^{-\frac{A}{B}})+\delta_{i,1}\delta_{K,2C-A} \mathcal O(\tau^{-\frac{2A}{B}}),\\
\tilde Z_{i}(R,\Theta,\zeta,\tau)=\mathcal O(1), \quad 
\eta_{i}(R,\Theta,\zeta,\tau)=\mathcal O(1), \quad 
\tilde\Lambda_{i}(R,\Theta,\zeta,\tau)=\mathcal O(\tau^{-\frac{3A}{B}})
\end{gather*} 
as $\tau\to\infty$ uniformly for all $|R|\leq R_0$ and $(\Theta,\zeta)\in\mathbb R^2$, where the functions $\mathcal F_{i,K}(r,\theta,\zeta,\tau)$ are defined by \eqref{Fforms}.
\end{Th}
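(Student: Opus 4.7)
The plan is to carry out the three-step change of variables in the order indicated in the theorem and to control the asymptotic order in $\tau$ at each stage. The first step is supplied by Lemma~\ref{Lem1}, which already reduces \eqref{ps} to the amplitude-angle system \eqref{PS} whose coefficients admit the expansions \eqref{fgAB} to all orders in $\rho^{-1}$. Next I substitute the resonance-centred variables $\rho=\varrho_\varkappa(t)(1+t^{-A}r)$, $\varphi=\varkappa^{-1}S(t)+\theta$ and rescale time by $\tau=t^B/B$. Since $\zeta(\tau)\equiv S((B\tau)^{1/B})=s(\beta+1)\tau$, the chirped phase is a linear function of $\tau$ on which all the coefficients of the system are $2\pi\varkappa$-periodic. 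Applying It\^o's formula together with the deterministic time change of the Wiener process $w(t)\mapsto \tilde w(\tau)$ (which introduces the factor $(dt/d\tau)^{1/2}=(B\tau)^{(1-B)/(2B)}$ into the diffusion) yields a stochastic system for $(r,\theta)$. The resonance identity $\nu(\varrho_\varkappa(t))=\varkappa^{-1}S'(t)$ cancels the otherwise-leading drift in $\theta$, so after Taylor-expanding $\nu(\rho)$ about $\varrho_\varkappa(t)$ via \eqref{nuas} and using the asymptotics of $\varrho_\varkappa(t)$, the drift of $\theta$ starts at order $\tau^{-A/B}$ (driven by $\nu'(\varrho_\varkappa)\varrho_\varkappa t^{-A}r$), and the drift of $r$ starts at the same order (driven by the resonant harmonic of $F_1$).

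The next task is to sort the resulting drift and diffusion terms by their order in $\tau$ and to identify which exponents lie above the threshold $\tau^{-3A/B}$. The definitions \eqref{ABC}, combined with \eqref{ncond} and \eqref{adas}, yield $\mathcal M=\mathcal M_1$, $B\geq 2A>0$, and $C\geq 3A/2$; these relations are exactly what is needed to verify that only the exponents $K\in\{A,B-A,2A,B,2C-A\}$ in the drift of $r$ and $K\in\{A,2A\}$ in the drift of $\theta$ reach that order. The $K=2C-A$ contribution arises from the It\^o second-order correction $t^{-2\gamma}g_1$ in \eqref{PS} after rescaling, while the diffusion coefficients acquire the scales $\tau^{-(C-A)/B}$ and $\tau^{-C/B}$ respectively. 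The third and final step is a near-identity averaging substitution $r=R+\tau^{-A/B}\tilde Z_1(R,\Theta,\zeta,\tau)$, $\theta=\Theta+\tau^{-A/B}\tilde Z_2(R,\Theta,\zeta,\tau)$ designed to eliminate the $\zeta$-oscillating component of the drift. Using $d\zeta/d\tau=s(\beta+1)$, the defining equation is $s(\beta+1)\partial_\zeta\tilde Z_i=-\{\mathcal F_{i,A}\}_{\varkappa\zeta}$, which has a unique $2\pi\varkappa$-periodic mean-zero solution; since the right-hand side is $\mathcal O(1)$ in $\tau$, one obtains $\tilde Z_i=\mathcal O(1)$. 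A second application of It\^o's formula to this near-identity substitution then replaces the oscillating part of each $\tau^{-K/B}$-drift by its $\zeta$-average $\langle \mathcal F_{i,K}\rangle_{\varkappa\zeta}$, producing the structure \eqref{Lambda12}; the It\^o corrections coming from the quadratic variation of the Brownian terms (products of $\tau^{-(C-A)/B}$ and $\tau^{-C/B}$) account for the extra $\mathcal O(\tau^{-A/B})$ and $\mathcal O(\tau^{-2A/B})$ contributions inside $\Lambda_{1,2A}$ and $\Lambda_{1,2C-A}$.

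The main obstacle I anticipate is the algebraic bookkeeping rather than any conceptual subtlety: the expansions of $\nu(\rho)$ and $\varrho_\varkappa(t)$, of $X_{i,k}$ in \eqref{X1X2as}, and of $f_i,g_i,c_i$ in \eqref{fgAB} have to be multiplied out and regrouped by $\tau$-order, and every contribution must be compared against the threshold $\tau^{-3A/B}$ and either retained inside some $\Lambda_{i,K}$ or absorbed into $\tilde\Lambda_i$. Showing that all discarded contributions are truly $\mathcal O(\tau^{-3A/B})$ uniformly on $|R|\leq R_0$ and $(\Theta,\zeta)\in\mathbb R^2$ reduces to a finite list of linear inequalities among $A$, $B$, $C$, $\mathcal M_1$, $\mathcal M_2$; the inequalities $B\geq 2A$ and $C\geq 3A/2$ provided by \eqref{ncond}--\eqref{adas} are precisely what makes this bookkeeping close. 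The uniform boundedness of $\tilde Z_i$ and $\eta_i$ and the claimed periodicity in $\Theta$ and $\zeta$ then follow from the corresponding periodicity of the coefficients in \eqref{PS} in $\varphi$ and $S$ combined with the construction of the two successive substitutions.
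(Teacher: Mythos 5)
Your overall route coincides with the paper's: Lemma~\ref{Lem1} for the amplitude--angle reduction, the resonance-centred rescaling \eqref{subs} with It\^{o}'s formula and the time change of the Wiener process leading to \eqref{zeq}, and then averaging over the fast linear phase $\zeta$ under the inequalities $B\geq 2A>0$, $C\geq 3A/2$ supplied by \eqref{ncond} and \eqref{adas}. However, the averaging step as you state it has a genuine gap. You pose only the leading homological equation $\varsigma\,\partial_\zeta \tilde Z_i=-\{\mathcal F_{i,A}\}_{\varkappa\zeta}$ and then assert that this substitution "replaces the oscillating part of each $\tau^{-K/B}$-drift by its $\zeta$-average." A correction at the single order $A$ removes oscillations only from the $\tau^{-A/B}$ terms; the oscillating parts of $\mathcal F_{1,B-A}$, $\mathcal F_{i,2A}$, $\mathcal F_{1,B}$, $\mathcal F_{1,2C-A}$ survive, and under the standing assumptions these can be of size $\tau^{-A/B}$ or $\tau^{-2A/B}$ (e.g.\ when $B=2A$ one has $B-A=A$, and $2C-A$ may equal $2A$), so they cannot be absorbed into $\tilde\Lambda_i=\mathcal O(\tau^{-3A/B})$ and the claimed $\zeta$-independence of the $\Lambda_{i,K}$ in \eqref{Lambda12} would fail. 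What is actually needed, and what the paper does, is a near-identity transformation \eqref{trans} with corrections $Z_{i,K}$ at \emph{all} orders $K\in\{A,B-A,2A,2(B-A),2C-A,B,B+2C-3A\}$, solved recursively through the chain of homological equations \eqref{vsys}; the extra orders $2(B-A)$ and $B+2C-3A$ are forced by the interaction terms $\tilde{\mathcal F}_{i,K}$ in \eqref{aaa2}, which are built from the previously determined $Z_{i,k}$, $\Lambda_{i,k}$ and can fall below the $3A$ threshold when $B=2A$ and $C=3A/2$.

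A related inaccuracy: you attribute both residual corrections in the estimate for $\Lambda_{i,K}$ to the quadratic variation of the noise. In fact the $\mathcal O(\tau^{-A/B})$ term inside $\Lambda_{1,2A}$ comes from the drift--drift commutator part of $\tilde{\mathcal F}_{1,2A}$ (terms like $Z_{1,A}\partial_r\Lambda_{1,A}-\mathcal F_{1,A}\partial_r Z_{1,A}$), whereas the It\^{o} correction $-\tfrac{\mu^2}{2}\xi_{1,0}^2\partial_r^2 Z_{i,A}$ enters only at order $2C-A$ (and $B+2C-3A$) and produces the $\mathcal O(\tau^{-2A/B})$ term in $\Lambda_{1,2C-A}$; estimating these averages requires tracking how $r$-derivatives of $\mathcal F_{i,K}$ gain factors $t^{-A}$ through the expansions \eqref{fgAB}. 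Finally, to arrive at the form of $\tilde Z_i$ stated in the theorem you must also invert the near-identity map, which requires the Jacobian estimate $1+\mathcal O(\tau^{-A/B})$; this is routine but should be said. With the multi-order hierarchy and these estimates supplied, your plan becomes the paper's proof.
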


Here, $\delta_{i,j}$ is the Kronecker delta. Note that \eqref{adas} is equivalent to the inequality $C\geq 3A/2$. If the opposite inequality holds, then the diffusion coefficients in \eqref{veq} become leading in the long-term dynamics as $\tau\to\infty$. This case requires a special attention and is not discussed here.  

Consider the truncated deterministic system corresponding to \eqref{veq} 
\begin{gather}\label{det1}
\frac{dR}{d\tau}=\Lambda_1(R,\Theta,\zeta(\tau),\tau), \quad 
\frac{d\Theta}{d\tau}=\Lambda_2(R,\Theta,\zeta(\tau),\tau)
\end{gather}
as $\tau\geq \tilde\tau_0$, where $\tilde \tau_0=\tilde t_0^B/B$.
It follows from \eqref{Lambda12} and \eqref{Fforms} that the functions $\Lambda_1(R,\Theta,\zeta,\tau)$ and $\Lambda_2(R,\Theta,\zeta,\tau)$ can be written as 
\begin{gather*}
\Lambda_i(R,\Theta,\zeta,\tau)\equiv \hat \lambda_i(R,\Theta,\tau)+\tilde \lambda_i(R,\Theta,\zeta,\tau), \quad i\in\{1,2\},
\end{gather*} 
with
\begin{gather}\label{lambda12}
\begin{split}
 \hat \lambda_1(R,\Theta,\tau)\equiv &\,\tau^{-\frac{A}{B}}\vartheta_{1,A}(\Theta,\tau)+\tau^{-\frac{B-A}{B}}\chi_{1,B-A}(\tau) \\
 &+ \tau^{-\frac{2A}{B}}\vartheta_{1,2A}(\Theta,\tau) R+\tau^{-1}\chi_{1,B}(\tau) R+ \tau^{-\frac{2C-A}{B}}\vartheta_{1,2C-A}(\Theta,\tau), \\
 \hat \lambda_2(R,\Theta,\tau)\equiv &\,\tau^{-\frac{A}{B}}\chi_{2,A} (\tau)R+ \tau^{-\frac{2A}{B}}\left(\vartheta_{2,2A}(\Theta,\tau) + \chi_{2,2A}(\tau) R^2\right),
\end{split}
\end{gather}
where
\begin{align*}
 \vartheta_{1,A}(\Theta,\tau)\equiv & \,  B^{-\frac{A}{B}}(z(t))^a   \left \langle  \tilde f_1\left(\varrho_\varkappa(t),\varkappa^{-1}\zeta+\Theta,\zeta\right)\right \rangle_{\varkappa \zeta}\Big|_{t=(B\tau)^{\frac 1B}}  , \\
\vartheta_{1,2A}(\Theta,\tau)\equiv & \, (a+1) B^{-\frac{A}{B}}\vartheta_{1,A}(\Theta)+B^{-\frac{2A}{B}}(z(t))^a\varrho_\varkappa(t)\left \langle   \partial_\rho \tilde f_1\left(\varrho_\varkappa(t),\varkappa^{-1}\zeta+\Theta,\zeta\right)\right \rangle_{\varkappa \zeta}\Big|_{t=(B\tau)^{\frac 1B}},\\ 
\vartheta_{1,2C-A}(\Theta,\tau)\equiv & \,  B^{\frac{A-2C}{B}}(z(t))^{2b}   \left \langle  \tilde g_1\left(\varrho_\varkappa(t),\varkappa^{-1}\zeta+\Theta,\zeta\right)\right \rangle_{\varkappa \zeta}\Big|_{t=(B\tau)^{\frac 1B}}  , \\ 
\vartheta_{2,2A} (\Theta,\tau)\equiv & \, B^{-\frac{2A}{B}} (z(t))^a\left\langle  \tilde f_2\left(\varrho_\varkappa(t),\varkappa^{-1}\zeta+\Theta,\zeta\right)\right\rangle_{\varkappa \zeta} \Big|_{t=(B\tau)^{\frac 1B}},\\ 
 \chi_{1,B-A}(\tau) \equiv & \, - B^{\frac{A-B}{B}}\left(\frac{\beta}{h}+ t (\log z(t))'\right)\Big|_{t=(B\tau)^{\frac 1B}} 
, \\
  \chi_{1,B}(\tau) \equiv & \,  \frac{A}{B} + B^{-\frac{A}{B}} \chi_{1,B-A}(\tau),\\
  \chi_{2,A}(\tau)\equiv & \, B^{-\frac{A}{B}}(z(t))^h\Big\{ h   \tilde \nu(\varrho_\varkappa(t)) + \partial_\rho\tilde \nu\left(\varrho_\varkappa(t)\right) \varrho_\varkappa(t) \Big\}\Big|_{t=(B\tau)^{\frac 1B}}, \\ 
  \chi_{2,2A} (\tau) \equiv & \, B^{-\frac{2A}{B}}  (z(t))^h \left\{\frac{h(h-1)   \tilde \nu\left(\varrho_\varkappa(t)\right)}{2} + h \partial_\rho\tilde \nu\left(\varrho_\varkappa(t) \right)\varrho_\varkappa(t)+ \frac{\partial_\rho^2\tilde \nu\left(\varrho_\varkappa(t)\right)(\varrho_\varkappa(t))^2}{2} \right\}\Big|_{t=(B\tau)^{\frac 1B}}.
\end{align*}
In this case, $\tilde \lambda_i(R,\Theta,\zeta,\tau)\equiv \Lambda_i(R,\Theta,\zeta,\tau)-\hat\lambda_i(R,\Theta,\tau)$. For every $i\in\{1,2\}$ the following estimate holds: $\tilde \lambda_i(R,\Theta,\zeta,\tau)=\mathcal O(\tau^{-{3A}/B})$ as $\tau\to\infty$ uniformly for all $|R|\leq R_0$ and $(\Theta,\zeta)\in\mathbb R^2$. It can easily be checked that the following asymptotic expansions hold:
\begin{gather}\label{asthetachi}\begin{split}
\vartheta_{i,K}(\Theta,\tau) & \sim \vartheta_{i,K,0}(\Theta)+\sum_{k=1}^\infty \tau^{-\frac{k \beta}{B h}} \vartheta_{i,K,k}(\Theta), \\ 
\chi_{i,K}(\tau) & \sim \chi_{i,K,0}+\sum_{k=1}^\infty \tau^{-\frac{k \beta}{B h}} \chi_{i,K,k}
\end{split}
\end{gather}
as $\tau \to\infty$, where $\vartheta_{i,K,k}(\Theta)$ are $2\pi$-periodic in $\Theta$ and $\chi_{i,K,k}$ are constants. In particular, the leading terms have the form
\begin{align*}
	\vartheta_{1,A,0}(\Theta)\equiv &\,   B^{-\frac{A}{B}} z_0^a  \left \langle   f_{1,0}\left(\varkappa^{-1}\zeta+\Theta,\zeta\right)\right \rangle_{\varkappa \zeta}, \\
	\vartheta_{1,2A,0}(\Theta)\equiv &\, (a+1) B^{-\frac{A}{B}}\vartheta_{1,A,0}(\Theta),\\ 
	\vartheta_{1,2C-A,0} (\Theta)\equiv &\,  B^{\frac{A-2C}{B}} z_0^{2b}  \left\langle  g_{1,0}\left(\varkappa^{-1}\zeta+\Theta,\zeta\right)\right\rangle_{\varkappa \zeta}, \\
	\vartheta_{2,2A,0} (\Theta)\equiv &\, B^{-\frac{2A}{B}} z_0^a  \left\langle  f_{2,0}\left(\varkappa^{-1}\zeta+\Theta,\zeta\right)\right\rangle_{\varkappa \zeta},\\ 
\chi_{1,B-A,0} = &\, - B^{\frac{A-B }{B}} \frac{\beta}{h}, \\ 
\chi_{1,B,0} = &\, \frac{1}{B} \left( A- \frac{\beta}{  h}\right), \\
\chi_{2,A,0}=  &\, B^{-\frac{A}{B}}  z_0^h h  \nu_0, \\ 
\chi_{2,2A,0} = &\, B^{-\frac{2A}{B}} \frac{z_0^h h(h-1)\nu_0}{2}.
\end{align*}

It is clear that the long-term dynamics of the asymptotically autonomous system \eqref{det1} depends on the properties of the leading terms of the right-hand sides as $\tau \to \infty$. With this in mind, define 
\begin{align}\label{Pdef}
 \mathcal P(\Theta)&\equiv  \vartheta_{1,A,0}(\Theta)+\delta_{B,2A}\chi_{1,B-A,0}, \\
\label{Jdef}
 J(\Theta)&\equiv  \vartheta_{1,2A,0}(\Theta)+\vartheta_{2,2A,0}'(\Theta)+\delta_{B,2A}\chi_{1,B,0},
\end{align}  
and consider the assumption 
\begin{gather}\label{astheta}
\exists\, \Theta_0\in\mathbb R: \quad \mathcal P(\Theta_0)=0, \quad \mathcal P'(\Theta_0)\neq 0.
\end{gather}
Then, we have the following:
\begin{Lem}\label{LTh2}
Let assumptions \eqref{ncond}, \eqref{adas} and \eqref{astheta} hold. 
\begin{itemize}
\item
If $\mathcal P'(\Theta_0)<0$ and $J(\Theta_0)<0$, then there exists $\tau_0\geq \tilde\tau_0$ such that system \eqref{det1} has an asymptotically stable solution $R_\ast(\tau)$, $\Theta_\ast(\tau)$ defined for all $\tau\geq \tau_0$. Moreover, $R_\ast(\tau)=o(1)$ and $\Theta_\ast(\tau)= \Theta_0+o(1)$ as $\tau\to\infty$. 
\item If $\mathcal P'(\Theta_0)<0$ and $J(\Theta_0)>0$ or $\mathcal P'(\Theta_0)>0$, then there exists $\varepsilon_\ast>0$ such that for all $\delta\in (0,\varepsilon_\ast)$ the solutions of system \eqref{det1} with initial data $\sqrt{R^2(\tau_\ast)+ (\Theta(\tau_\ast)-\Theta_0)^2}=\delta$ and some $\tau_\ast\geq \tilde\tau_0$ exit from the domain $\{(R,\Theta)\in\mathbb R^2: \sqrt{R^2+(\Theta-\Theta_0)^2}< \varepsilon_\ast\}$ in a finite time.
\end{itemize}
\end{Lem}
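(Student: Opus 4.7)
The plan is to treat the lemma as a stability result for an asymptotically autonomous planar system, reducing it to Lyapunov/Chetaev analysis after identifying the approximate equilibrium trajectory and exploiting the Hamiltonian structure at leading order. First, I would construct $(R_\ast(\tau),\Theta_\ast(\tau))$ by successive approximations: seek formal expansions $\Theta_\ast(\tau)=\Theta_0+\sum_{k\geq 1}\tau^{-\alpha_k}\theta_k$, $R_\ast(\tau)=\sum_{k\geq 1}\tau^{-\alpha_k}r_k$ with exponents generated by combinations of $A/B$, $(B-A)/B$, $(2C-A)/B$, $\beta/(Bh)$, and solve order by order. The inversion step at each order uses $\mathcal P'(\Theta_0)\neq 0$ and $\chi_{2,A,0}=B^{-A/B}z_0^h h\nu_0>0$; the formal series is then promoted to an actual solution of \eqref{det1} via a contraction in a weighted Banach space, absorbing the $\zeta$-dependent remainder $\tilde\lambda_i=\mathcal O(\tau^{-3A/B})$.

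Next, I would substitute $R=R_\ast(\tau)+u$, $\Theta=\Theta_\ast(\tau)+v$. The linear part around this trajectory is, to leading order, a skew-symmetric $\tau^{-A/B}$ matrix with eigenvalues $\pm i\tau^{-A/B}\omega_0$, $\omega_0:=\sqrt{-\mathcal P'(\Theta_0)\chi_{2,A,0}}>0$, and a $\tau^{-2A/B}$ correction whose trace equals exactly $J(\Theta_0)$ by the definitions \eqref{Pdef}--\eqref{Jdef} and the relations $2A/B\leq 1$, $(2C-A)/B\geq 2A/B$ established by the hypothesis \eqref{adas}. For the stable case $\mathcal P'(\Theta_0)<0$, $J(\Theta_0)<0$, I would employ the Lyapunov function
\begin{gather*}
V(u,v,\tau)=\frac{\chi_{2,A,0}}{2}u^2+\Phi(\Theta_\ast(\tau)+v)-\Phi(\Theta_\ast(\tau))+\tau^{-\frac{A}{B}}W(u,v,\tau),
\end{gather*}
where $\Phi'(\Theta)=-\mathcal P(\Theta)$ (so $\Phi$ has a strict local minimum at $\Theta_0$ by $\mathcal P'(\Theta_0)<0$), and $W$ is a small correction chosen to cancel the oscillatory cross-terms generated by the leading Hamiltonian flow. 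A direct computation shows the $\tau^{-A/B}$ contribution to $dV/d\tau$ cancels, while the $\tau^{-2A/B}$ contribution reduces, after using $W$, to $J(\Theta_0)$ times the positive-definite quadratic form in $(u,v)$, giving $dV/d\tau\leq -c\tau^{-2A/B}V$ for sufficiently large $\tau$ and small $(u,v)$. Since $2A/B\leq 1$, the integral $\int^\infty\tau^{-2A/B}\,d\tau$ diverges, so Gronwall's inequality yields $V(\tau)\to 0$ and hence $(u,v)\to 0$, establishing the asymptotic stability claim.

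For the unstable alternatives I would use Chetaev's theorem. If $\mathcal P'(\Theta_0)>0$, the same $V$ is indefinite (a saddle form); in the positive cone $\{V>0\}$ near the origin, the analogous computation gives $dV/d\tau\geq c\tau^{-2A/B}V$, so trajectories entering this cone exit any small ball in finite time. If $\mathcal P'(\Theta_0)<0$ but $J(\Theta_0)>0$, then $V$ is positive-definite and $dV/d\tau\geq c\tau^{-2A/B}V$ instead, so $V$ grows along trajectories starting inside $\{0<V\leq\delta^2\}$, again forcing exit in finite time by divergence of $\int^\infty\tau^{-2A/B}d\tau$.

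The main obstacle will be a careful bookkeeping at the $\tau^{-2A/B}$ level when computing $dV/d\tau$: the Hamiltonian cancellation at order $\tau^{-A/B}$ is immediate, but the surviving quadratic form must be shown to have trace equal to $J(\Theta_0)$ (rather than merely some function of $\Theta_0$) after the auxiliary $\tau^{-A/B}W$ correction removes the residual $\Theta$-oscillation. In particular, the two exceptional regimes $B=2A$ (where $\chi_{1,B-A,0}$ and $\chi_{1,B,0}$ enter $\mathcal P$ and $J$) and $2C-A=2A$ (where $\vartheta_{1,2C-A,0}(\Theta)$ shifts the approximate equilibrium $R_\ast$) must be handled as separate sub-cases; the $\zeta$-dependent remainder $\tilde\lambda_i=\mathcal O(\tau^{-3A/B})$ is strictly smaller than the dissipation $\tau^{-2A/B}$ and can be absorbed uniformly, but verifying this uniformity requires the $2\pi\varkappa$-periodicity of $\tilde\lambda_i$ in $\zeta$ stated in Theorem~\ref{Th1}.
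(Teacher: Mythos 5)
Your stable case follows essentially the paper's route (quasi-equilibrium near $(0,\Theta_0)$, an energy-type Lyapunov function plus an $\mathcal O(\tau^{-A/B})$ cross-correction producing dissipation of size $\tau^{-2A/B}J(\Theta_0)$, then Gronwall using divergence of $\int\tau^{-2A/B}d\tau$), and your trace identification with $J(\Theta_0)$, including the $B=2A$ corrections, is correct. The genuine gap is in the case $\mathcal P'(\Theta_0)>0$. There the lemma imposes \emph{no} sign condition on $J(\Theta_0)$, yet your Chetaev argument uses the indefinite energy-like function $V$ and claims $dV/d\tau\geq c\,\tau^{-2A/B}V$ on the cone $\{V>0\}$. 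Since the leading $\tau^{-A/B}$ Hamiltonian part cancels in $dV/d\tau$, the surviving $\tau^{-2A/B}$ contribution is proportional to $J(\Theta_0)$ times a quadratic form; if $J(\Theta_0)\leq 0$ (perfectly allowed here) the inequality fails — near the $u$-axis one has $V>0$ but $dV/d\tau<0$ — so your function does not detect the saddle instability. The instability in this case is driven by the \emph{hyperbolic} structure at the stronger order $\tau^{-A/B}$, and the paper captures it with the Chetaev function $\mathcal U=z_1z_2$, whose derivative is $\tau^{-A/B}\left(\chi_{2,A,0}z_1^2+\mathcal P'(\Theta_0)z_2^2+\cdots\right)$, positive definite independently of $J(\Theta_0)$; your choice of $V$ cannot be repaired by bookkeeping at the $\tau^{-2A/B}$ level.

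A secondary weakness: in both unstable alternatives you linearize about an exact special solution $(R_\ast,\Theta_\ast)$ whose existence you obtain only by an asserted contraction argument; constructing such a solution when the origin is repelling (or a saddle) is not routine, and nothing in your sketch addresses it. The paper avoids this entirely: for instability it expands about the quasi-equilibrium $(\hat R(\tau),\hat\Theta(\tau))$, which is not a solution, accepts non-vanishing perturbation terms $\mathcal O(\tau^{-\tilde\kappa})$, and controls them by working with $\mathcal W=\sqrt{\tilde{\mathcal V}}$ (so the inhomogeneity enters boundedly), choosing the starting time $\tilde\tau_\ast$ large in terms of the initial deviation $\tilde\delta$ — which is exactly why the lemma is phrased with ``some $\tau_\ast$''. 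Also note that in the stable case the paper needs a two-step argument (first trap a solution near $(\hat R,\hat\Theta)$ using the non-vanishing perturbation estimate $\mathcal O(\tau^{-\kappa})\mathcal O(|\mathbf z|)$, then re-center at that solution to get the clean inequality $d\mathcal V/d\tau\leq-\tau^{-2A/B}\ell_\ast\mathcal V$); if you insist on the contraction construction instead, you must actually carry out the weighted-space estimate absorbing the oscillatory $\tilde\lambda_i=\mathcal O(\tau^{-3A/B})$ remainder, which your proposal only names.
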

Thus, Lemma~\ref{LTh2} guarantees the existence and stability of the phase-locking regime in the truncated system \eqref{det1}. Such solutions correspond to a resonant increase of the amplitude, $\rho(t)\sim \varrho_\varkappa(t)$, and to a synchronization of the system with the excitation, $\varphi(t)-\varkappa^{-1}S(t)\sim\Theta_0$ as $t\to\infty$. If \eqref{astheta} does not hold, we have the following:
\begin{Lem}\label{LTh3}
Let assumptions \eqref{ncond}, \eqref{adas} hold and 
\begin{gather}\label{asthetanot}
\mathcal P(\Theta)\neq 0 \quad \forall\, \Theta\in\mathbb R.
\end{gather}
Then, the solutions of system \eqref{det1} exit from any bounded domain in a finite time.
\end{Lem}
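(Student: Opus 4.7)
The plan is to exploit the fact that the non-vanishing of $\mathcal P$ forces a definite-sign drift in the $R$-equation of system~\eqref{det1} at the dominant asymptotic order $\tau^{-A/B}$. First I would note that by \eqref{Lambda12}, \eqref{lambda12} and the asymptotic expansions \eqref{asthetachi}, the $R$-equation takes the form
\begin{gather*}
\frac{dR}{d\tau}=\tau^{-\frac{A}{B}}\bigl(\vartheta_{1,A,0}(\Theta)+\delta_{B,2A}\chi_{1,B-A,0}\bigr)+\mathcal R(R,\Theta,\zeta(\tau),\tau)=\tau^{-\frac{A}{B}}\mathcal P(\Theta)+\mathcal R(R,\Theta,\zeta(\tau),\tau),
\end{gather*}
where, thanks to $A>0$, $2C-A\geq 2A>A$, $B\geq 2A$ (with the contribution $\tau^{-(B-A)/B}=\tau^{-A/B}$ absorbed into $\mathcal P$ exactly when $B=2A$), and the $\mathcal O(\tau^{-3A/B})$ bound on $\tilde\lambda_1$, one verifies that $\mathcal R(R,\Theta,\zeta,\tau)=o(\tau^{-A/B})$ as $\tau\to\infty$ uniformly in $(\Theta,\zeta)\in\mathbb R^2$ and $|R|\leq R_0$ for any fixed $R_0>0$. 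The routine part is tracking the exponents; none of the remaining terms in \eqref{lambda12} matches the order $\tau^{-A/B}$.

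Next I would use \eqref{asthetanot}: since $\mathcal P$ is continuous and $2\pi$-periodic in $\Theta$, and never vanishes, there exists $p_0>0$ and a definite sign $\sigma_0\in\{+1,-1\}$ such that $\sigma_0\mathcal P(\Theta)\geq p_0$ for all $\Theta\in\mathbb R$. Then I would argue by contradiction: suppose some solution $(R(\tau),\Theta(\tau))$ remains in a bounded domain $\mathcal U\subset\mathbb R^2$ for all $\tau$ in its maximal interval. Set $R_0:=\sup\{|R|:(R,\Theta)\in\mathcal U\}$. By the uniform remainder bound above, there exists $\tau_1\geq\tilde\tau_0$ such that for every $\tau\geq\tau_1$ and every $|R|\leq R_0$,
\begin{gather*}
\sigma_0\,\Lambda_1(R,\Theta,\zeta(\tau),\tau)\geq \frac{p_0}{2}\,\tau^{-\frac{A}{B}}.
\end{gather*}
Hence along the trajectory,
\begin{gather*}
\sigma_0\bigl(R(\tau)-R(\tau_1)\bigr)\geq \frac{p_0}{2}\int_{\tau_1}^{\tau}s^{-\frac{A}{B}}\,ds=\frac{p_0}{2}\cdot\frac{B}{B-A}\bigl(\tau^{1-\frac{A}{B}}-\tau_1^{1-\frac{A}{B}}\bigr),
\end{gather*}
where $0<A/B\leq 1/2$ by the ordering $B\geq 2A>0$ established right after \eqref{adas}, so the right-hand side grows to $+\infty$. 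This contradicts $|R(\tau)|\leq R_0$ and shows that the trajectory must leave $\mathcal U$ in finite time; since $\mathcal U$ was arbitrary, the conclusion follows.

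The main obstacle will be the bookkeeping in the first step, where one must verify that every term in \eqref{lambda12} other than $\tau^{-A/B}\mathcal P(\Theta)$ is genuinely of lower order as $\tau\to\infty$ uniformly on bounded sets in $R$, and in particular handle the two regimes $B=2A$ and $B>2A$ uniformly. Once this uniform remainder estimate is in place, the contradiction argument is short and amounts to the integrability calculation above.
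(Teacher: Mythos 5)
Your proposal is correct and follows essentially the same route as the paper: both arguments reduce to the observation that \eqref{asthetanot} together with continuity and periodicity of $\mathcal P$ forces $|\Lambda_1|\geq D_1\tau^{-A/B}$ with a fixed sign on bounded sets of $R$ (all other terms in \eqref{lambda12} being of strictly lower order since $B\geq 2A$, $2C-A\geq 2A$ and $\tilde\lambda_1=\mathcal O(\tau^{-3A/B})$, with the $\tau^{-(B-A)/B}$ term absorbed into $\mathcal P$ when $B=2A$), and then integrate the non-integrable lower bound to conclude that $R(\tau)$ grows without bound. The paper phrases this as a direct growth estimate rather than a contradiction and additionally records a companion lower bound on $|d\Theta/d\tau|$, but that extra estimate is not needed for the stated conclusion.
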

This lemma describes the conditions for the phase drifting mode. In this case, the amplitude $\rho(t)$ and the phase $\varphi(t)$ of system can significantly differ from $\varrho_\varkappa(t)$ and $\varkappa^{-1}S(t)$ even in the absence of the stochastic perturbations. 

Let us show that the phase-locking regime is preserved with a high probability in the full stochastic system \eqref{veq}. Define the function
\begin{gather*}
d(\rho,\varphi,t)\equiv \sqrt{\left\{\left(\frac{\rho}{\varrho_{\varkappa}(t)}-1\right)t^{A}-R_\ast\left(\frac{t^B}{B}\right)\right\}^2+\left\{\varphi-\varkappa^{-1}S(t)-\Theta_\ast\left(\frac{t^B}{B}\right)\right\}^2}.
\end{gather*}
Then, we have
\begin{Th}\label{Th2}
Let assumptions \eqref{Ucond}, \eqref{Qform}, \eqref{ncond}, \eqref{adas}, and \eqref{astheta} hold. If $\mathcal P'(\Theta_0)<0$ and $J(\Theta_0)<0$, then for all $\varepsilon_1>0$, $\varepsilon_2>0$, $\epsilon\in (0,1)$ and $t_\ast\geq \tilde t_0$ there exist $\delta_1>0$ and $\delta_2>0$ such that the solution $\rho(t)$, $\varphi(t)$ of system \eqref{PS} with initial data  $d(\rho(t_\ast),\varphi(t_\ast),t_\ast)<\delta_1$ and $0<\mu<\delta_2$ satisfies
\begin{gather}\label{Th2est}
\mathbb P\left(\sup_{0<t-t_\ast\leq \tilde{\mathcal T}} d(\rho(t),\varphi(t),t)\geq\varepsilon_1\right)\leq \varepsilon_2,
\end{gather}
where 
\begin{gather*}
\tilde{\mathcal T}=
\begin{cases}
\displaystyle t_\ast\left(\mu^{-\frac{2(1-\epsilon)}{B}}-1\right), & \text{if} \quad  C < A+\frac{B}{2},\\
\displaystyle  t_\ast \left(\exp{ \mu^{-\frac{2(1-\epsilon)}{B}}}-1\right), & \text{if} \quad C  = A+\frac{B}{2},\\
\infty, & \text{if} \quad C  > A+\frac{B}{2}.
\end{cases}
\end{gather*}
\end{Th}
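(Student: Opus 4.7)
The plan is to apply the method of stochastic Lyapunov functions. By Theorem~\ref{Th1} it suffices to analyze the It\^{o} process $(R(\tau),\Theta(\tau))$ governed by \eqref{veq} near the stable particular trajectory $(R_\ast(\tau),\Theta_\ast(\tau))$ of the truncated system \eqref{det1} supplied by Lemma~\ref{LTh2}. Setting $u_1=R-R_\ast(\tau)$, $u_2=\Theta-\Theta_\ast(\tau)$, and using $R_\ast(\tau)\to 0$, $\Theta_\ast(\tau)\to\Theta_0$ together with $\tilde Z_i=\mathcal O(1)$, the distance $d(\rho,\varphi,t)$ coincides with $\sqrt{u_1^2+u_2^2}$ up to a factor $1+o(1)$ as $\tau\to\infty$, so it is enough to control $|u|$ with high probability. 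The shifted It\^{o} system has drift vanishing at $u=0$ and diffusion coefficients of order $\mu\tau^{-(C-A)/B}$ and $\mu\tau^{-C/B}$ respectively.

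From \eqref{lambda12} and \eqref{asthetachi}, the linearization of \eqref{det1} at $(R_\ast,\Theta_\ast)$ is a decaying-frequency rotation with leading rate $\sim\tau^{-A/B}$ (generated by the off-diagonal entries $\mathcal P'(\Theta_0)$ and $\chi_{2,A,0}$, whose product is negative, hence the leading eigenvalues are purely imaginary), perturbed by a dissipation of order $\tau^{-2A/B}$ whose trace equals $J(\Theta_0)<0$. I take the rotation-invariant energy $V_0(u)=\chi_{2,A,0}u_1^2-\mathcal P'(\Theta_0)u_2^2$, which is positive definite and conserved under the leading rotation; its Lie derivative along the full truncated flow is indefinite at order $\tau^{-2A/B}$, so I correct it to $V(u,\tau)=V_0(u)+W(u,\tau)$ with $W=\mathcal O(\tau^{-A/B}|u|^2)$ chosen by one step of averaging to remove the mean-zero oscillatory part. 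This yields
\begin{equation*}
(\partial_\tau+\hat\lambda_1\partial_{u_1}+\hat\lambda_2\partial_{u_2})V(u,\tau)\leq -k_1\tau^{-2A/B}|u|^2+\mathcal O(\tau^{-3A/B}|u|^2)
\end{equation*}
on $\{|u|\leq\delta_0\}$ for $\tau$ large enough, with $k_1>0$ controlled by $-J(\Theta_0)$.

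Applying It\^{o}'s formula to $V$ along \eqref{veq} and using $\tilde\Lambda_i=\mathcal O(\tau^{-3A/B})$, $\eta_i=\mathcal O(1)$, the infinitesimal generator satisfies
\begin{equation*}
\mathcal L V(u,\tau)\leq -\tfrac{k_1}{2}\tau^{-2A/B}|u|^2+k_2\mu^2\tau^{-2(C-A)/B},\qquad |u|\leq\delta_0,\ \tau\geq\tau_1,
\end{equation*}
since the $R$-diffusion contribution $\mathcal O(\mu^2\tau^{-2(C-A)/B})$ dominates the $\Theta$-contribution $\mathcal O(\mu^2\tau^{-2C/B})$. Let $\tau_\varepsilon=\inf\{\tau\geq\tau_\ast:V(u(\tau),\tau)\geq\varepsilon_3^2\}$; then $V(u(\tau\wedge\tau_\varepsilon),\tau\wedge\tau_\varepsilon)-k_2\mu^2\int_{\tau_\ast}^{\tau\wedge\tau_\varepsilon}s^{-2(C-A)/B}\,ds$ is a supermartingale, and Doob's inequality gives
\begin{equation*}
\mathbb P\Big(\sup_{\tau_\ast\leq s\leq T}V(u(s),s)\geq\varepsilon_3^2\Big)\leq\varepsilon_3^{-2}\Big(V(u(\tau_\ast),\tau_\ast)+k_2\mu^2\int_{\tau_\ast}^{T}s^{-2(C-A)/B}\,ds\Big).
\end{equation*}
The integral evaluates to $\mathcal O(T^{1-2(C-A)/B})$, $\mathcal O(\log T)$, or a bounded constant, according to whether $C<A+B/2$, $C=A+B/2$, or $C>A+B/2$. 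Taking $T=(t_\ast+\tilde{\mathcal T})^B/B$ and requiring $V(u(\tau_\ast),\tau_\ast)<\varepsilon_2\varepsilon_3^2/2$ and $\mu^2\cdot(\text{integral})<\varepsilon_2\varepsilon_3^2/2$ for $\mu<\delta_2$ and $d(\rho(t_\ast),\varphi(t_\ast),t_\ast)<\delta_1$ reproduces the three time horizons in the statement; translating the bound on $|u|$ back through \eqref{subs} yields \eqref{Th2est}.

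The hard part is the Lyapunov construction, because at the dissipation order $\tau^{-2A/B}$ the linearization provides only a sign-indefinite quadratic form whose trace is $J(\Theta_0)$, not a negative-definite matrix; the decay must be extracted by averaging against the leading rotation through the near-identity correction $W$, and one has to verify that $\partial_\tau W$ and the cubic remainders generate only $\mathcal O(\tau^{-3A/B}|u|^2)$ terms that can be absorbed. The assumption $C\geq 3A/2$ in \eqref{adas} is exactly what makes both these remainders and the diffusion contribution $\mu^2\tau^{-2(C-A)/B}$ strictly subordinate to the leading $\tau^{-2A/B}$ dissipation, so $\mathcal L V$ stays negative in mean for sufficiently small $\mu$.
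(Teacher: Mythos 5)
Your proposal follows essentially the same route as the paper: pass to the deviations from $(R_\ast(\tau),\Theta_\ast(\tau))$ in \eqref{veq}, build a Lyapunov function from the positive-definite quadratic (Hamiltonian-type) form corrected by an $\mathcal O(\tau^{-A/B})$ cross term so that the generator satisfies $\mathcal L V\leq -c\,\tau^{-2A/B}|{\bf z}|^2+C\mu^2\tau^{-2(C-A)/B}$, compensate the noise-injection term by the deterministic integral of $\tau^{-2(C-A)/B}$, and conclude with a supermartingale maximal bound, the three time horizons arising from that integral exactly as in the paper. The only (cosmetic) difference is that the paper adds the compensator $\mu^2\mathcal V_1(\tau)$ \emph{into} the Lyapunov function so the stopped process is a nonnegative supermartingale and Doob's inequality applies directly, whereas the subtracted-integral process you name is not nonnegative and should instead be treated by optional stopping, which gives the same estimate.
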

Note that deviation estimates similar to \eqref{Th2est} usually arise when studying the stability in probability~\cite[\S 5.3]{RH12}. Besides, the stability on an asymptotically large time interval corresponds to some variant of a practical stability~\cite[\S 25]{LL61}. 

Thus, Theorem~\ref{Th2} describes the condtions under which the stochastic perturbations do not destroy the resonant capture in system \eqref{ps}. Depending on the parameters of the system, the dynamics $\rho(t)\approx \varrho_\varkappa(t)$, $\varphi(t)\approx \varkappa^{-1}S(t)+\Theta_0$ is preserved with a high probability on infinite or asymptotically large time intervals as $\mu\to 0$. Combining this with Theorem~\ref{Th1}, we see that the resonant behaviour occurs in system \eqref{PS} such that 
\begin{gather*}
x_1(t)\approx t^{\frac{\beta}{h}} z_0 X_{1,0}(\varphi(t)), \quad 
x_2(t)\approx t^{\frac{(h+1)\beta}{h}} z_0^{h+1} X_{2,0}(\varphi(t)),
\end{gather*}
where $X_{1,0}(\varphi)$ and $X_{2,0}(\varphi)$ are $2\pi$-periodic functions defined by \eqref{Xi0}.

\section{Proof of auxiliary results}
\label{Sec4}
\begin{proof}[Proof of Lemma~\ref{Lem1}]
Applying It\^{o}'s formula~\cite[\S 4.2]{BO98} to \eqref{RPhi}, we see that in the new variables $(\rho,\varphi)$ system \eqref{ps} takes the form \eqref{PS}.
From the properties of the functions $X_1(\varphi,\rho)$ and $X_2(\varphi,\rho)$ it follows that
\begin{align*}
	\partial_{x_2} \begin{pmatrix} Y_1 \\ Y_2 \end{pmatrix} & \equiv \frac{1}{ \Delta(\rho)} \begin{pmatrix} \partial_\varphi X_1\\ -\partial_\rho X_1 \end{pmatrix}, \\ 
	\partial_{x_2}^2 \begin{pmatrix} Y_1 \\ Y_2 \end{pmatrix} & \equiv \frac{1}{ \Delta(\rho)} \left( \partial_\varphi X_1 \partial_\rho -  \partial_\rho X_1  \partial_\varphi \right)  \frac{1}{ \Delta(\rho)} \begin{pmatrix} \partial_\varphi X_1\\ -\partial_\rho X_1 \end{pmatrix}.
\end{align*}
Combing this with \eqref{Qform} and \eqref{X1X2as}, we obtain \eqref{fgAB}.
\end{proof}

\begin{proof}[Proof of Lemma~\ref{Lem2}]
It follows from \eqref{fgAB} that there exist $D>0$ and $\hat \varrho_0>\hat\rho_0$ such that
\begin{gather*}
|f_i(\varrho,\phi,S)|\leq D \varrho^{a+2-i}, \quad |g_i(\varrho,\phi,S)|\leq D \varrho^{2b+2-i} 
\end{gather*}
for all $i\in\{1,2\}$, $\varrho\geq \hat \varrho_0$ and $(\varrho,\phi)\in\mathbb R^2$. Combining this with the first equation in \eqref{trsys}, we obtain the following inequality for solutions:
\begin{gather}\label{rhoineq}
	\left|\frac{d}{dt}\log\varrho(t)\right| \leq D \left(t^{-\alpha} (\varrho(t))^{a}+t^{-2\gamma} (\varrho(t))^{2b}\right).
\end{gather}
It is not hard to check that 
\begin{gather*}
\left(\log\varrho_\varkappa(t)\right)'\sim \frac{\beta}{h} t^{-1}, \quad 
t^{-\alpha} (\varrho_\varkappa(t))^{a}\sim z_0^a t^{\mathcal M_1}, \quad 
t^{-2\gamma} (\varrho_\varkappa(t))^{2b}\sim z_0^{2b} t^{\mathcal M_2}, \quad t\to\infty.
\end{gather*}
Substituting these estimates into \eqref{rhoineq}, we see that system \eqref{trsys} admits solutions with $\varrho(t)\sim \varrho_\varkappa(t)$ as $t\to\infty$ if $\mathcal M\geq -1$.

From the second equation in \eqref{trsys} it follows that
\begin{gather*}
\frac{\phi(t)}{S(t)}-\varkappa^{-1}= \mathcal O\left(t^{\mathcal M-\beta }\right), \quad t\to\infty
\end{gather*}
for $\varrho(t)\sim \varrho_\varkappa(t)$. Since  $\phi(t)/S(t)\to \varkappa^{-1}$ as $t\to\infty$, we get $\mathcal M<\beta $.
\end{proof}

\section{Change of variables}
\label{Sec5}
\subsection{Amplitude remainder and phase shift}
Consider the change of variables \eqref{subs} in system \eqref{PS}. Using It\^{o}'s formula and the change of time formula for stochastic integrals~\cite[\S 8.5]{BO98}, we obtain
\begin{gather}\label{zeq}
\begin{split}
&dr=\mathcal F_1(r,\theta,\zeta(\tau),\tau)\, d\tau + \tau^{-\frac{C-A}{B}}\mu \xi_1(r,\theta,\zeta(\tau),\tau)\, d\tilde w(\tau), \\
&d\theta=\mathcal F_2(r,\theta,\zeta(\tau),\tau)\, d\tau + \tau^{-\frac{C}{B}}\mu \xi_2(r,\theta,\zeta(\tau),\tau)\, d\tilde w(\tau),
\end{split}
\end{gather}
where the functions  
\begin{align*}
\mathcal F_1 \equiv &\,    t^{A+1-B}  \left\{F_1\left(\varrho_\varkappa(t) (1+t^{-A} r ),\varkappa^{-1}\zeta+\theta,\zeta,t\right) -\varrho_\varkappa'(t)\right\}(\varrho_\varkappa(t))^{-1}\\ & + t^{-B} r \left(A-t\left(\log\varrho_\varkappa(t)\right)'\right)\Big|_{t=(B\tau)^{\frac 1B}},\\
\mathcal F_2 \equiv &\,  t^{1-B} \Big\{\nu\big(\varrho_\varkappa(t) (1+t^{-A} r )\big)-\varkappa^{-1}S'(t)+F_2\left(\varrho_\varkappa(t) (1+t^{-A} r ),\varkappa^{-1}\zeta+\theta,\zeta,t\right)\Big\}\Big|_{t=(B\tau)^{\frac 1B}},\\
\xi_1\equiv&\, t^{C-\gamma+\frac{1-B}{2}}B^{-\frac{C-A}{B}}c_1\left(\varrho_\varkappa(t) (1+t^{-A} r ),\varkappa^{-1}\zeta+\theta,\zeta,t\right)(\varrho_\varkappa(t))^{-1}\Big|_{t=(B\tau)^{\frac 1B}}, \\
\xi_2\equiv &\, t^{C-\gamma+\frac{1-B}{2}} B^{-\frac CB} c_2\left(\varrho_\varkappa(t) (1+t^{-A} r ),\varkappa^{-1}\zeta+\theta,\zeta,t\right)\Big|_{t=(B\tau)^{\frac 1B}}
\end{align*}
are $2\pi$-periodic in $\theta$ and $2\pi\varkappa$-periodic in $\zeta$.
Note that the right-hand sides of system \eqref{zeq} satisfy the following estimates:
\begin{gather}\label{FFxixias}
	\begin{split}
\mathcal F_1(r,\theta,\zeta,\tau)  = & \sum_{K\in\{A,B-A,2A,2C-A,B\}}
  \tau^{-\frac{K}{B}}\mathcal F_{1,K}(r,\theta,\zeta,\tau) +\mathcal O(\tau^{-\frac{3A}{B}}), \\
\mathcal F_2(r,\theta,\zeta,\tau)  = & \sum_{K\in\{A,2A\}}
  \tau^{-\frac{K}{B}}\mathcal F_{2,K}(r,\theta,\zeta,\tau) +\mathcal O(\tau^{-\frac{3A}{B}}), \\
\xi_i(r,\theta,\zeta,\tau)  = & \,  \xi_{i,0}(r,\theta,\zeta,\tau)+\mathcal O(\tau^{-\frac{A}{B}})
\end{split}
\end{gather}
as $\tau\to\infty$ uniformly for all $(\theta,\zeta)\in\mathbb  R^2$ and $|r|\leq r_0$, where $r_0=(\tilde \rho_0/\rho_0-1)\hat t_0^A>0$, 
\begin{gather}\label{Fforms}
\begin{split}
\mathcal F_{1,A}(r,\theta,\zeta,\tau)\equiv &\,  B^{-\frac{A}{B}}(z(t))^a \tilde f_1\left(\varrho_\varkappa(t) (1+t^{-A} r ),\varkappa^{-1}\zeta+\theta,\zeta\right)\Big|_{t=(B\tau)^{\frac 1B}},\\
\mathcal F_{1,B-A}(r,\theta,\zeta,\tau)\equiv &\, - B^{\frac{A-B}{B}}\left(\frac{\beta}{h}+ t (\log z(t))'\right)\Big|_{t=(B\tau)^{\frac 1B}},\\
\mathcal F_{1,2A}(r,\theta,\zeta,\tau)\equiv &\, r B^{-\frac{A}{B}} (a+1)  \mathcal F_{1,A}(r,\theta,\zeta,\tau),\\
\mathcal F_{1,B}(r,\theta,\zeta,\tau)\equiv &\, r\left(\frac{A}{B} + B^{-\frac{A}{B}} \mathcal F_{1,B-A}(r,\theta,\zeta,\tau)\right),\\
\mathcal F_{1,2C-A}(r,\theta,\zeta,\tau)\equiv &\,  B^{\frac{A-2C}{B}}(z(t))^{2b} \tilde g_1\left(\varrho_\varkappa(t) (1+t^{-A} r ),\varkappa^{-1}\zeta+\theta,\zeta\right)\Big|_{t=(B\tau)^{\frac 1B}},\\
\mathcal F_{2,A}(r,\theta,\zeta,\tau)\equiv &\,  B^{-\frac{A}{B}}(z(t))^h\Big\{ h r \tilde \nu(\varrho_\varkappa(t)) + t^{A}\Big(\tilde \nu\big(\varrho_\varkappa(t)(1+t^{-A}r)\big)-\tilde \nu\big(\varrho_\varkappa(t)\big) \Big)\Big\}\Big|_{t=(B\tau)^{\frac 1B}}, \\
\mathcal F_{2,2A}(r,\theta,\zeta,\tau)\equiv &\, r  B^{-\frac{2A}{B}} (z(t))^h\frac{h}{2} \Big\{ (h-1) r \tilde \nu(\varrho_\varkappa(t)) + 2 t^{A}\Big(\tilde \nu\big(\varrho_\varkappa(t)(1+t^{-A}r)\big)-\tilde \nu\big(\varrho_\varkappa(t)\big) \Big)\Big\} \\
&\, +  B^{-\frac{2A}{B}} (z(t))^a\tilde f_2\left(\varrho_\varkappa(t) (1+t^{-A} r ),\varkappa^{-1}\zeta+\theta,\zeta\right)\Big|_{t=(B\tau)^{\frac 1B}},\\
\xi_{1,0}(r,\theta,\zeta,\tau)\equiv &\, B^{-\frac{C-A}{B}} (z(t))^b \tilde c_1\left(\varrho_\varkappa(t) (1+t^{-A} r ),\varkappa^{-1}\zeta+\theta,\zeta\right)  \Big|_{t=(B\tau)^{\frac 1B}},\\
\xi_{2,0}(r,\theta,\zeta,\tau)\equiv &\, B^{-\frac{C}{B}} (z(t))^b \tilde c_2\left(\varrho_\varkappa(t) (1+t^{-A} r ),\varkappa^{-1}\zeta+\theta,\zeta\right)  \Big|_{t=(B\tau)^{\frac 1B}}.
\end{split}
\end{gather}
Moreover, the following asymptotic expansions hold:
\begin{gather*}
\mathcal F_{i,K}(r,\theta,\zeta,\tau)\sim \mathcal F_{i,K}^0(r,\theta,\zeta)+\sum_{j\geq 1, k\geq 0} \tau^{-\frac{j (\beta/h)+k A}{B}}\mathcal F_{i,K}^{j,k}(r,\theta,\zeta),\\
\xi_{i,0}(r,\theta,\zeta,\tau)\sim \mathcal \xi_{i,0}^0(r,\theta,\zeta)+\sum_{j\geq 1, k\geq 0} \tau^{-\frac{j (\beta/h)+k A}{B}} \xi_{i,0}^{j,k}(r,\theta,\zeta)
\end{gather*}
as $\tau\to\infty$ with time-independent coefficients $\mathcal F_{1,K}^{j,k}(r,\theta,\zeta)$ and $\xi_{i,0}^{j,k}(r,\theta,\zeta)$. In particular,  
\begin{align*}
	\mathcal F_{1,A}^0 & \equiv  B^{-\frac{A}{B}} z_0^a f_{1,0}(\varkappa^{-1}\zeta+\theta,\zeta), & \mathcal F_{2,A}^0 & \equiv r B^{-\frac{A}{B}} h  \nu_0 z_0^h,\\ 
	\mathcal F_{1,B-A}^0 & \equiv -B^{\frac{A-B}{B}}\frac{\beta}{h},  & \mathcal F_{2,B-A}^0 & \equiv 0, \\ 
	\mathcal F_{1,2A}^0 & \equiv r B^{-\frac{A}{B}}(a+1)  \mathcal F_{1,A}^0, & \mathcal F_{2,2A}^0 & \equiv B^{-\frac{2A}{B}} \left(h(h-1)\nu_0 z_0^{h}\frac{r^2 }{2} + f_{2,0}(\varkappa^{-1}\zeta+\theta,\zeta) z_0^a \right), \\ 
	\mathcal F_{1,B}^0 & \equiv \frac{r}{B}\left(A-\frac{\beta}{h}\right),   & \mathcal F_{2,B}^0 & \equiv 0,\\
\mathcal F_{1,2C-A}^0 & \equiv  B^{\frac{A-2C}{B}}z_0^{2b}g_{1,0}(\varkappa^{-1}\zeta+\theta,\zeta),  & \mathcal F_{2,2C-A}^0 & \equiv 0, \\
	\xi_{1,0}^0  & \equiv B^{-\frac{C-A}{B}} z_0^b c_{1,0}(\varkappa^{-1}\zeta+\theta,\zeta), &
	\xi_{2,0}^0 & \equiv B^{-\frac{C}{B}} z_0^b c_{2,0}(\varkappa^{-1}\zeta+\theta,\zeta).
\end{align*}

\subsection{Averaging}
Note that the limiting system, corresponding to \eqref{zeq}, has the form
\begin{gather*}
\frac{dr}{d\tau}=0, \quad \frac{d\theta}{d\tau}=0, \quad \frac{d\zeta}{d\tau}=\varsigma,
\end{gather*}
where $\varsigma=s B>0$. In this case, $\zeta$ can be considered as a fast variable, and system \eqref{zeq} can be simplified by averaging the drift terms with respect to $\zeta$. Note that such averaging transformations are successfully used in problems with a small parameter~\cite{BM61,AKN06} and in studying the asymptotics of solutions to non-autonomous systems at infinity~\cite{LK09,OS23JMS}. Consider a near-identity transformation in the following form:
\begin{gather}\label{trans}\begin{split}
 &\tilde R(r,\theta,\zeta,\tau)
\equiv \displaystyle r +\sum_{K\in\mathcal X}\tau^{-\frac{K}{B}} Z_{1,K}(r,\theta,\zeta,\tau), \\
 &\tilde \Theta(r,\theta,\zeta,\tau)
\equiv \displaystyle \theta +\sum_{K\in\mathcal X}\tau^{-\frac{K}{B}} Z_{2,K}(r,\theta,\zeta,\tau),
\end{split}
\end{gather}
where $\mathcal X:=\{A,B-A,2A,2(B-A),2C-A,B,B+2C-3A\}$. The coefficients $Z_{i,K}(r,\theta,\zeta,\tau)$, $i\in\{1,2\}$ are assumed to be periodic w.r.t. $\theta$ and $\zeta$ with zero means. These functions are chosen in such a way that the drift terms of system \eqref{veq} written in the new variables 
\begin{gather}\label{RPsi}
R(\tau)=\tilde R(r(\tau),\theta(\tau),\zeta(\tau),\tau), \quad 
\Theta(\tau)=\tilde \Theta(r(\tau),\theta(\tau),\zeta(\tau),\tau)
\end{gather}
do not depend explicitly on $\zeta(\tau)$ at least in the leading terms.
By applying It\^{o}'s formula~\cite[\S 4.2]{BO98} to \eqref{RPsi}, we obtain  
\begin{align*}
& dR = \mathcal L \tilde R \, d\tau+  \mu\left(\tau^{-\frac{C-A}{B}} \xi_1 \partial_r  +\tau^{-\frac{C}{B}} \xi_2 \partial_\theta\right) \tilde R \, d\tilde w(\tau),\\
& d\Theta = \mathcal L \tilde \Theta \, d\tau+  \mu\left(\tau^{-\frac{C-A}{B}} \xi_1 \partial_r  +\tau^{-\frac{C}{B}} \xi_2 \partial_\theta\right) \tilde \Theta \, d\tilde w(\tau),
\end{align*}
where
\begin{gather*}
\mathcal L := \mathcal F_1 \partial_r +  \mathcal F_2 \partial_\theta + \partial_\tau+\varsigma\partial_\zeta+\frac{\mu^2}{2}\tau^{-\frac{2(C-A)}{B}}\left(\xi_1^2 \partial^2_r+2\tau^{-\frac{A}{B}}\xi_1\xi_2\partial_r\partial_\theta+\tau^{-\frac{2A}{B}}\xi_2^2 \partial_\theta^2\right)
\end{gather*}
is the generator of the process defined by \eqref{zeq} (see, for example,~\cite[\S 3.3]{RH12}). Calculating $\mathcal L \tilde R$ and $\mathcal L \tilde \Theta$, using \eqref{FFxixias} and \eqref{trans}, and comparing the result with \eqref{Lambda12}, we obtain the following chain of equations:
\begin{gather}\label{vsys}
\varsigma\partial_\zeta  Z_{i,K} =\Lambda_{i,K}(r,\theta,\tau)-\mathcal F_{i,K}(r,\theta,\zeta,\tau)+\tilde{\mathcal F}_{i,K}(r,\theta,\zeta,\tau), \quad K\in\mathcal X, \quad i\in \{1,2\}.
\end{gather}
It is assumed that 
\begin{gather}\label{aaa1}
\mathcal F_{2,B-A}\equiv \mathcal F_{2,B}\equiv \mathcal F_{2,2C-A}\equiv\mathcal F_{i,2(B-A)}\equiv\mathcal F_{i,B+2C-3A}\equiv 0.
\end{gather} 
The functions $\tilde{\mathcal F}_{i,K}(r,\theta,\zeta,\tau)$ are expressed through $\{Z_{i,k}, \Lambda_{i,k}\}_{k<K}$. 
In particular,  
\begin{gather}\label{aaa2}
\begin{split}
\tilde{\mathcal F}_{i,A}(r,\theta,\zeta,\tau)\equiv & \, 0, \\ 
\tilde {\mathcal F}_{i,B-A}(r,\theta,\zeta,\tau) \equiv & \, 0, \\
 \tilde{\mathcal F}_{i,2A}(r,\theta,\zeta,\tau)\equiv &  
\left(
Z_{1,A}(r,\theta,\zeta,\tau) \partial_r+  Z_{2,A}(r,\theta,\zeta,\tau)\partial_\theta 
\right)\Lambda_{i,A}(r,\theta,\tau) 
\\ & - \left(
\mathcal F_{1,A}(r,\theta,\zeta,\tau)\partial_r + \mathcal F_{2,A}(r,\theta,\zeta,\tau)\partial_\theta
\right) Z_{i,A}(r,\theta,\zeta,\tau), \\
 \tilde{\mathcal F}_{i,2(B-A)}(r,\theta,\zeta,\tau)\equiv &  
\left(
Z_{1,B-A}(r,\theta,\zeta,\tau) \partial_r+  Z_{2,B-A}(r,\theta,\zeta,\tau)\partial_\theta 
\right)\Lambda_{i,B-A}(r,\theta,\tau) 
\\ & -  
\mathcal F_{1,B-A}(r,\theta,\zeta,\tau)\partial_r   Z_{i,A}(r,\theta,\zeta,\tau),\\
\tilde{\mathcal F}_{i,2C-A}(r,\theta,\zeta,\tau)\equiv & 
- \frac{\mu^2}{2}(\xi_{1,0}(r,\theta,\zeta,\tau))^2\partial_r^2 Z_{i,A}(r,\theta,\zeta,\tau),\\
 \tilde{\mathcal F}_{i,B}(r,\theta,\zeta,\tau)\equiv &  
\left(
Z_{1,B-A}(r,\theta,\zeta,\tau) \partial_r+  Z_{2,B-A}(r,\theta,\zeta,\tau)\partial_\theta 
\right)\Lambda_{i,A}(r,\theta,\tau) 
\\ 
&+ \left(
Z_{1,A}(r,\theta,\zeta,\tau) \partial_r+  Z_{2,A}(r,\theta,\zeta,\tau)\partial_\theta 
\right)\Lambda_{i,B-A}(r,\theta,\tau) 
\\
& - \left(
\mathcal F_{1,A}(r,\theta,\zeta,\tau)\partial_r + \mathcal F_{2,A}(r,\theta,\zeta,\tau)\partial_\theta
\right) Z_{i,B-A}(r,\theta,\zeta,\tau) \\
& - 
\mathcal F_{1,B-A}(r,\theta,\zeta,\tau)\partial_r Z_{i,A}(r,\theta,\zeta,\tau),\\
\tilde{\mathcal F}_{i,B+2C-3A}(r,\theta,\zeta,\tau)\equiv & 
- \frac{\mu^2}{2}(\xi_{1,0}(r,\theta,\zeta,\tau))^2\partial_r^2 Z_{i,B-A}(r,\theta,\zeta,\tau).
\end{split}
\end{gather}
Define 
\begin{gather*}
\Lambda_{i,K}(r,\theta,\tau)\equiv \langle \mathcal F_{i,K}(r,\theta,\zeta,\tau)-\tilde{\mathcal F}_{i,K}(r,\theta,\zeta,\tau)\rangle_{\varkappa\zeta}, \quad i\in\{1,2\}, \quad K\in\mathcal X.
\end{gather*}
Then, the right-hand side of system \eqref{vsys} has zero average. Integrating \eqref{vsys}, we obtain
\begin{align*}
Z_{i,K}(r,\theta,\zeta,\tau)\equiv -\frac{1}{\varsigma}\left\{\int\limits_0^\zeta \{\mathcal F_{i,K}(r,\theta,\zeta,\tau)-\tilde{\mathcal F}_{i,K}(r,\theta,\zeta,\tau)\}_{\varkappa \zeta} \, d\zeta\right\}_{\varkappa \zeta}.
\end{align*}
It can easily be checked that 
\begin{align*}
&  Z_{2,A} \equiv 
Z_{i,B-A} \equiv 
Z_{2,2(B-A)} \equiv 
Z_{2,2C-A} \equiv 
Z_{2,B} \equiv 
Z_{i,B+2C-3A} \equiv0, \\
&\tilde {\mathcal F}_{2,2(B-A)} \equiv 
\tilde {\mathcal F}_{2,2C-A} \equiv 
\tilde {\mathcal F}_{2,B} \equiv
\tilde {\mathcal F}_{i,B+2C-3A} \equiv 0, \\
&\langle\tilde {\mathcal F}_{2,2A}(r,\theta,\zeta,\tau)\rangle_{\varkappa \zeta}\equiv 
\langle\tilde {\mathcal F}_{i,2(B-A)}(r,\theta,\zeta,\tau)\rangle_{\varkappa \zeta}\equiv 
\langle\tilde {\mathcal F}_{1,B}(r,\theta,\zeta,\tau)\rangle_{\varkappa \zeta}\equiv0,
\end{align*}
and $\langle\tilde {\mathcal F}_{1,2A}(r,\theta,\zeta,\tau)\rangle_{\varkappa \zeta}=\mathcal O(\tau^{-A/B})$, 
$\langle\tilde {\mathcal F}_{1,2C-A}(r,\theta,\zeta,\tau)\rangle_{\varkappa \zeta}=\mathcal O(\tau^{-2A/B})$  as  $\tau \to\infty$ uniformly for all $|r|\leq r_0$ and $\theta\in\mathbb R$.
Combining this with \eqref{aaa1} and \eqref{aaa2}, we obtain 
\begin{align*}
&\Lambda_{i,2(B-A)}(r,\theta,\tau)\equiv \Lambda_{i,B+2C-3A}(r,\theta,\tau)\equiv  0, \quad i\in\{1,2\},\\
& \Lambda_{2,B-A}(r,\theta,\tau)\equiv \Lambda_{2,B}(r,\theta,\tau)\equiv \Lambda_{2,2C-A}(r,\theta,\tau)\equiv 0, \\
& \Lambda_{i,K}(r,\theta,\tau)= \langle \mathcal F_{i,K}(r,\theta,\zeta,\tau)\rangle_{\varkappa \zeta}+\delta_{i,1}\delta_{K,2A}\mathcal O(\tau^{-\frac{A}{B}})+\delta_{i,1}\delta_{K,2C-A}\mathcal O(\tau^{-\frac{2A}{B}}),\quad \tau \to\infty.
\end{align*} 

It follows from \eqref{trans} that for all $\varepsilon \in (0,r_0)$ there exists  $\tilde\tau_0\geq \hat t_0^B/B$ such that
\begin{gather*}
|\tilde R(r, \theta, \zeta, \tau) - r|\leq \varepsilon,  \quad | \tilde \Theta(r, \theta,\zeta, \tau) - \theta|\leq \varepsilon
\end{gather*}
for all $|r|\leq r_0$, $(\theta,\zeta)\in\mathbb R^2$ and $\tau\geq \tilde \tau_0$. Moreover, 
\begin{gather*}
{\hbox{\rm det}}\frac{\partial (\tilde R, \tilde \Theta)}{\partial (r,\theta)}\equiv 
\begin{vmatrix}
\partial_r\tilde R & \partial_\theta \tilde R \\
\partial_r\tilde \Theta & \partial_\theta \tilde \Theta
\end{vmatrix} = 1+\mathcal O(\tau^{-\frac{A}{B}}), \quad \tau\to\infty
\end{gather*}
uniformly for all $|r|\leq r_0$ and $\theta\in\mathbb R$. Hence, the transformation $(r,\theta)\mapsto (R,\Theta)$ is invertible. Denote by $r = \tilde r(R,\Theta,\zeta,\tau)$, $\theta = \tilde \theta(R,\Theta,\zeta,\tau)$ the inverse transformation to \eqref{RPsi}. Then, 
\begin{align*}
\tilde \Lambda_1(R,\Theta,\zeta,\tau)  
\equiv   &  \, \mathcal L 
 \tilde R(r,\theta,\zeta,\tau) \Big|_{r = \tilde r(R,\Theta,\zeta,\tau), \theta = \tilde \theta(R,\Theta,\zeta,\tau)} -\sum_{K\in\mathcal X} \tau^{-\frac{K}{B}} \Lambda_{1,K}(\tilde r(R,\Theta,\zeta,\tau),\tilde \theta(R,\Theta,\zeta,\tau),\tau), \\
\tilde \Lambda_2(R,\Theta,\zeta,\tau)  
\equiv   &  \, \mathcal L 
 \tilde \Theta(r,\theta,\zeta,\tau) \Big|_{r = \tilde r(R,\Theta,\zeta,\tau), \theta = \tilde \theta(R,\Theta,\zeta,\tau)} -\sum_{K\in\mathcal X} \tau^{-\frac{K}{B}} \Lambda_{2,K}(\tilde r(R,\Theta,\zeta,\tau),\tilde \theta(R,\Theta,\zeta,\tau),\tau), \\
\eta_1(R,\Theta,\zeta,\tau) \equiv & \, \left( \xi_1(r,\theta,\zeta,\tau) \partial_r  +\tau^{-\frac{A}{B}} \xi_2(r,\theta,\zeta,\tau) \partial_\theta\right) \tilde R(r,\theta,\zeta,\tau)\Big|_{r = \tilde r(R,\Theta,\zeta,\tau), \theta = \tilde \theta(R,\Theta,\zeta,\tau)}, \\
\eta_2(R,\Theta,\zeta,\tau) \equiv & \, \left(\tau^{ \frac{A}{B}}  \xi_1 (r,\theta,\zeta,\tau) \partial_r  +\xi_2 (r,\theta,\zeta,\tau)\partial_\theta\right) \tilde\Theta(r,\theta,\zeta,\tau)\Big|_{r = \tilde r(R,\Theta,\zeta,\tau), \theta = \tilde \theta(R,\Theta,\zeta,\tau)}.
\end{align*}
It follows that for every $i\in\{1,2\}$ the following estimates hold:
\begin{align*}
&\tilde\Lambda_i(R,\Theta,\zeta,\tau)=\mathcal O(\tau^{-\frac{3A}{B}}), \quad
\eta_i(R,\Theta,\zeta,\tau)=  \xi_{i}(R,\Theta,\zeta,\tau)+\mathcal O(\tau^{-\frac{A}{B}})
\end{align*}
as $\tau\to\infty$ uniformly for all $|R|\leq r_0-\varepsilon$ and $(\Theta,\zeta)\in\mathbb R^2$.

Thus, we obtain the proof of Theorem~\ref{Th1} with
$\tilde Z_1(R,\Theta,\zeta,\tau)\equiv (\tilde r(R,\Theta,\zeta,\tau)-R)\tau^{A/B}$, $ \tilde Z_2(R,\Theta,\zeta,\tau)\equiv (\tilde \theta(R,\Theta,\zeta,\tau)-\Theta)\tau^{A/B}$, $\tilde{t}_0=(B\tilde\tau_0)^{1/B}$and $R_0=r_0-\varepsilon$.

\section{Asymptotic regimes of the truncated system}
\label{Sec6}
\begin{proof}[Proof of Lemma~\ref{LTh2}]
Consider the system
\begin{gather}\label{sysrtheta}
\begin{cases} 
\hat\lambda_1(R,\Theta,\tau)=0, \\
\hat\lambda_2(R,\Theta,\tau)=0.
\end{cases}
\end{gather}
From \eqref{lambda12} and \eqref{astheta} it follows that $\tau^{ A/B}\hat\lambda_i(0,\Theta_0,\tau)=o(1)$ and 
\begin{gather*}
\tau^{ \frac{2A}{B}}\begin{vmatrix}
\partial_R\hat\lambda_1(0,\Theta_0,\tau) & \partial_\Theta\hat\lambda_1(0,\Theta_0,\tau) \\
\partial_R\hat\lambda_2(0,\Theta_0,\tau) & \partial_\Theta\hat\lambda_2(0,\Theta_0,\tau)
\end{vmatrix} = - \chi_{2,A,0}\mathcal P'(\Theta_0)+o(1), \quad \tau\to\infty.
\end{gather*}
Hence, there exists $\tau_0\geq \tilde \tau_0$ such that for all $\tau\geq \tau_0$ system \eqref{sysrtheta} has a smooth solution $\hat R (\tau)$, $\hat \Theta (\tau)$ such that $\hat R(\tau) \to 0$ and $\hat \Theta (\tau) \to \Theta_0$ as $\tau\to\infty$. It can easily be checked that
\begin{gather*}
\hat R(\tau)=-\tau^{-\frac{A}{B}}\left(\frac{\vartheta_{2,2A,0}(\Theta_0)}{\chi_{2,2A,0}}+\mathcal O(\tau^{-\frac{\beta}{B h}})\right), \quad 
\hat\Theta(\tau)=\Theta_0+\mathcal O(\tau^{-\frac{\beta}{B h}}), \quad \tau\to\infty.
\end{gather*}
Define the parameter
\begin{gather*}
\ell=\begin{cases}
\displaystyle \frac{A}{2B}, & \text{if}\quad B>2A, \\
\displaystyle  \min\left\{\frac{A}{2B},\frac{|J(\Theta_0)|}{3}\right\}, &\text{if}\quad B=2A.
\end{cases}
\end{gather*}
Then, substituting
\begin{gather}\label{subs111}
R(\tau)=\hat R(\tau)+\tau^{-\ell}z_1(\tau), \quad \Theta(\tau)=\hat \Theta(\tau)+\tau^{-\ell} z_2(\tau)
\end{gather}
into \eqref{det1}, we obtain a near-Hamiltonian system
\begin{gather}\label{nHam}
	\begin{split}
		\frac{dz_1}{d\tau}= &\, -  \tau^{-\frac{A}{B}}\partial_{z_2} \mathcal H(z_1,z_2,\tau)+\tau^{-\frac{2A}{B}} \mathcal G_1(z_1,z_2,\tau),\\
		\frac{dz_2}{d\tau}= & \, \tau^{-\frac{A}{B}}\partial_{z_1} \mathcal H(z_1,z_2,\tau)+\tau^{-\frac{2A}{B}}\left(\mathcal J(z_1,z_2,\tau)+ \mathcal G_2(z_1,z_2,\tau)\right),
	\end{split}
\end{gather}
where
\begin{gather}\label{HJGG}
	\begin{split}
\mathcal H(z_1,z_2,\tau)\equiv & \, \tau^{\ell+\frac{ A}{B}} \int\limits_0^{z_1} \hat \lambda_2(\hat R(\tau)+\tau^{-\ell}\varsigma_1,\hat \Theta(\tau),\tau)\,d\varsigma_1 \\ & -  \tau^{\ell+\frac{A}{B}} \int\limits_0^{z_2} \hat \lambda_1(\hat R(\tau)+\tau^{-\ell}z_1,\hat \Theta(\tau)+\tau^{-\ell}\varsigma_2,\tau)\,d\varsigma_2,\\
\mathcal J(z_1,z_2,\tau)\equiv & \, \tau^{\ell+ \frac{2A}{B}}\sum_{j=1}^2 \int\limits_0^{z_2} \partial_{z_i}\hat \lambda_i(\hat R(\tau)+\tau^{-\ell}z_1,\hat \Theta(\tau)+\tau^{-\ell}\varsigma_2,\tau) \,d\varsigma_2+\tau^{\frac{2A}{B}-1}2\ell z_2,\\
\mathcal G_1(z_1,z_2,\tau)\equiv & \, \tau^{\ell+ \frac{2A}{B}} \left(\tilde \lambda_1(\hat R(\tau)+\tau^{-\ell}z_1,\hat \Theta(\tau)+\tau^{-\ell}z_2,\zeta(\tau),\tau)-\hat R'(\tau)\right), \\
\mathcal G_2(z_1,z_2,\tau)\equiv & \, \tau^{\ell+ \frac{2A}{B}} \left(\tilde \lambda_2(\hat R(\tau)+\tau^{-\ell} z_1,\hat \Theta(\tau)+\tau^{-\ell} z_2,\zeta(\tau),\tau)-\hat \Theta'(\tau)\right).
	\end{split}
\end{gather}
Note that $\mathcal H(0,0,\tau)\equiv0$, $\partial_{z_i}\mathcal H(0,0,\tau)\equiv 0$, and $\mathcal J(0,0,\tau)\equiv 0$, while $\mathcal G_i(0,0,\tau)\not\equiv 0$ for $i\in\{1,2\}$. Moreover, 
\begin{gather}\label{estHJ}
\begin{split}
\mathcal H(z_1,z_2,\tau)=&\frac{1}{2}\left(\chi_{2,A,0} z_1^2-\mathcal P'(\Theta_0)z_2^2\right) +\mathcal O(\tau^{-\kappa})\mathcal O(|{\bf z}|^2),\\
\mathcal J(z_1,z_2,\tau)=&z_2 \left(J(\Theta_0)+\delta_{2A,B}2\ell+\mathcal O(\tau^{-\kappa})\right),
\end{split}
\end{gather}
and $\mathcal G_i(z_1,z_2,\tau) =  \mathcal O(\tau^{-\kappa})$ as $\tau\to\infty$ and $|{\bf z}|:=\sqrt{z_1^2+z_2^2}\to 0$ with $\kappa=\min\{A/(2B),\beta/(hB),\ell\}>0$. Note that the functions $\mathcal G_1(z_1,z_2,\tau)$ and $\mathcal G_2(z_1,z_2,\tau)$ can be considered as non-vanishing perturbations of the system with equilibrium at $(z_1,z_2)=(0,0)$. 

The rest of the proof is divided into three parts.

{\bf 1}. Let $\mathcal P'(\Theta_0)<0$ and $J(\Theta_0)<0$. In this case, the function $\mathcal H(z_1,z_2,\tau)$ is positive definite as $|{\bf z}|\to 0$ and $\tau\to \infty$. Hence, the equilibrium $(0,0)$ of the corresponding limiting system 
\begin{gather}\label{limzz}
\tau^{\frac{A}{B}}\frac{d\tilde z_1}{d\tau}=\mathcal P'(\Theta_0)\tilde z_2, 
\quad 
\tau^{\frac{A}{B}}\frac{d\tilde z_2}{d\tau}=\chi_{2,A,0}\tilde z_1 
\end{gather}
is a center. Let us prove the stability of the equilibrium with respect to perturbations $\mathcal G_i(R,\Theta,\tau)$. Consider the Lyapunov function candidate for system \eqref{nHam} in the following form:
\begin{gather}\label{LFC}
\mathcal V(z_1,z_2,\tau)\equiv \mathcal H(z_1,z_2,\tau)+\tau^{-\frac{A}{B}} \frac{\tilde Jz_1z_2}{2}  
\end{gather} 
 with $\tilde J=J(\Theta_0)+\delta_{2A,B}2\ell\leq -|J(\Theta_0)|/3<0$.
The derivative of $\mathcal V(z_1,z_2,\tau)$ along the trajectories of system \eqref{nHam} is given by
\begin{gather}
\label{dVest}
\begin{split}
\frac{d\mathcal V}{d\tau}\Big|_\eqref{nHam}:= &  \partial_\tau \mathcal V + \partial_{z_1}\mathcal V \left(-  \tau^{-\frac{A}{B}}\partial_{z_2} \mathcal H +\tau^{-\frac{2A}{B}} \mathcal G_1 \right) + \partial_{z_2}\mathcal V \left( \tau^{-\frac{A}{B}}\partial_{z_1} \mathcal H +\tau^{-\frac{2A}{B}} \left(\mathcal J+  \mathcal G_2 \right)\right)\\
=&\tau^{-\frac{2A}{B}}\left(\frac{\tilde J}{2}\left(\chi_{2,A,0} z_1^2-\mathcal P'(\Theta_0)z_2^2\right) +\mathcal O(\tau^{-\kappa})\mathcal O(|{\bf z}|)\right)
\end{split}
\end{gather}
as $\tau\to\infty$ and $|{\bf z}|\to 0$. It follows from \eqref{estHJ}, \eqref{LFC} and \eqref{dVest} that
there exist  $\tau_1\geq \tau_0$, $\Delta_1>0$,  $\mathcal V_-=\min\{\chi_{2,A,0}, |\mathcal P'(\Theta_0)|\}/4>0$, $\mathcal V_+=\max\{\chi_{2,A,0}, |\mathcal P'(\Theta_0)|\}>0$, $D_1=|\tilde J|\mathcal V_-$ and $D_2>0$ such that 
\begin{gather*}
\mathcal V_- |{\bf z}|^2\leq \mathcal V(z_1,z_2,\tau)\leq \mathcal V_+|{\bf z}|^2, \quad 
\frac{d\mathcal V}{d\tau}\Big|_{\eqref{nHam}}\leq \tau^{-\frac{2A}{B}}\left(-D_1 |{\bf z}|^2+\tau^{-\kappa}D_2|{\bf z}|\right)  
\end{gather*}
for all $\tau\geq \tau_1$ and $|{\bf z}|\leq \Delta_1$. Therefore, for all $\varepsilon>0$ there exist
\begin{gather*}
\tau_\varepsilon=\max\left\{\left(\frac{4D_2}{D_1 \varepsilon}\sqrt{\frac{\mathcal V_+}{\mathcal V_-}}\right)^{\frac 1\kappa}, \tau_1\right\}, \quad \delta_\varepsilon= \frac{2D_2 \tau_\varepsilon^{-\kappa}}{D_1}
\end{gather*}
such that $d\mathcal V/d\tau\leq -\tau^{-2A/B}D_1 |{\bf z}|^2/2$ for all $\tau\geq \tau_\varepsilon$ and $\delta_\varepsilon\leq |{\bf z}|\leq \varepsilon$. Combining this with the inequalities
\begin{gather*}
\sup_{|{\bf z}|\leq \delta_\varepsilon} \mathcal V(z_1,z_2,\tau)\leq \mathcal V_+ \delta_\varepsilon^2<\mathcal V_- \varepsilon^2=\inf_{|{\bf z}|=\varepsilon}\mathcal V(z_1,z_2,\tau)
\end{gather*}
for all $\tau\geq \tau_\varepsilon$, we see that any solution $(z_1(\tau),z_2(\tau))$ of system \eqref{nHam} with initial data $|{\bf z}(\tau_\varepsilon)|\leq \delta_\varepsilon$ satisfies $|{\bf z}(\tau)|<\varepsilon$ for all $\tau>\tau_\varepsilon$. From \eqref{subs111} and  the continuity of solutions to the Cauchy problem with respect to the initial data it follows that there exists a solution $R_\ast(\tau)$, $\Theta_\ast(\tau)$ of system \eqref{det1} defined for all $\tau\geq \tau_0$ such that $R_\ast(\tau)=\hat R(\tau)+\mathcal O(\tau^{-\ell})$ and $\Theta_\ast(\tau)= \hat\Theta(\tau)+\mathcal O(\tau^{-\ell})$ as $\tau\to\infty$.

Let us show that the solution $R_\ast(\tau)$, $\Psi_\ast(\tau)$ is stable in system \eqref{det1}. Substituting $R(\tau)=R_\ast(\tau)+\tau^{-\ell} z_1(\tau)$, $\Theta(\tau)=\Theta_\ast(\tau)+\tau^{-\ell}z_2(\tau)$ into \eqref{det1}, we obtain \eqref{nHam}, where the functions $\mathcal H(z_1,z_2,\tau)$, $\mathcal J(z_1,z_2,\tau)$ and $\mathcal G_i(z_1,z_2,\tau)$ defined by \eqref{HJGG} with $R_\ast(\tau)$, $\Psi_\ast(\tau)$ instead of $\hat R(\tau)$, $\hat \Psi(\tau)$. In this case, $\mathcal H(0,0,\tau)\equiv \mathcal J(0,0,\tau)\equiv \mathcal G_i(0,0,\tau)\equiv 0$. Moreover, the estimates \eqref{estHJ} hold and $\mathcal G_i(z_1,z_2,\tau)=\mathcal O(|{\bf z}|)\mathcal O(\tau^{-A/B})$ as $\tau\to\infty$ and $|{\bf z}|\to 0$. Using \eqref{LFC} as a Lyapunov function candidate, we see that
\begin{gather*}
\frac{d\mathcal V}{d\tau}\Big|_\eqref{nHam} =\tau^{-\frac{2A}{B}}\left(\frac{\tilde J}{2}\left(\chi_{2,A,0} z_1^2-\mathcal P'(\Theta_0)z_2^2\right) +\mathcal O(\tau^{-\kappa})\mathcal O(|{\bf z}|^2)\right)
\end{gather*}
as $\tau\to\infty$ and $|{\bf z}|\to 0$. Hence, there exist $\tau_\ast\geq \tau_1$ and $0<\Delta_\ast\leq \Delta_1$ such that 
\begin{gather}\label{Vineq2}
\frac{d\mathcal V}{d\tau}\Big|_\eqref{nHam}\leq -\tau^{-\frac{2A}{B}}\ell_\ast\mathcal V 
\end{gather}
for all $\tau\geq \tau_\ast$ and $|{\bf z}|\leq \Delta_\ast$ with $\ell_\ast=|\tilde J|\mathcal V_-/(2\mathcal V_+)>0$. Therefore, for all $\varepsilon\in (0,\Delta_\ast)$ there exists $\delta\in (0,\varepsilon)$ such that any solution of system \eqref{nHam} with initial data $|{\bf z}(\tau_\ast)|\leq \delta$ cannot leave the domain $\{|{\bf z}|\leq \varepsilon\}$ as $\tau>\tau_\ast$. Moreover,  integrating \eqref{Vineq2} with respect to $\tau$, we obtain
\begin{align*}
\mathcal V(z_1(\tau),z_2(\tau),\tau)\leq &  \mathcal V(z_1(\tau_\ast),z_2(\tau_\ast),\tau_\ast) \exp\left(-\frac{B \ell_\ast}{B-2A}\left(\tau^{1-\frac{2A}{B}}-\tau_\ast^{1-\frac{2A}{B}}\right)\right)  & \text{if}&  \ \ B>2A,\\
\mathcal V(z_1(\tau),z_2(\tau),\tau)\leq & \mathcal V(z_1(\tau_\ast),z_2(\tau_\ast),\tau_\ast)\left(\frac{\tau}{\tau_\ast}\right)^{-\ell_\ast} & \text{if}& \ \  B=2A
\end{align*}
as $\tau\geq \tau_\ast$. Thus, the solution $R_\ast(\tau)$, $\Psi_\ast(\tau)$ is asymptotically stable.

{\bf 2}. Consider now the case $\mathcal P'(\Theta_0)<0$ and $J(\Theta_0)>0$. Substituting
\begin{gather*}
R(\tau)=\hat R(\tau)+z_1(\tau), \quad \Theta(\tau)=\hat \Theta(\tau)+ z_2(\tau)
\end{gather*}
into \eqref{det1}, we obtain
\begin{gather}\label{nHam2}
	\begin{split}
		\frac{dz_1}{d\tau}= &\, -  \tau^{-\frac{A}{B}}\partial_{z_2} \tilde{\mathcal H}(z_1,z_2,\tau)+\tau^{-\frac{2A}{B}} \tilde{\mathcal G}_1(z_1,z_2,\tau),\\
		\frac{dz_2}{d\tau}= & \, \tau^{-\frac{A}{B}}\partial_{z_1} \tilde{\mathcal H}(z_1,z_2,\tau)+\tau^{-\frac{2A}{B}} \tilde{\mathcal J}(z_1,z_2,\tau)+ \tau^{-\frac{3A}{B}}\tilde{\mathcal G}_2(z_1,z_2,\tau) ,
	\end{split}
\end{gather}
where
\begin{gather*}
	\begin{split}
\tilde{\mathcal H}(z_1,z_2,\tau)\equiv & \, \tau^{\frac{ A}{B}} \int\limits_0^{z_1} \hat \lambda_2(\hat R(\tau)+\varsigma_1,\hat \Theta(\tau),\tau)\,d\varsigma_1 -  \tau^{\frac{A}{B}} \int\limits_0^{z_2} \hat \lambda_1(\hat R(\tau)+z_1,\hat \Theta(\tau)+\varsigma_2,\tau)\,d\varsigma_2,\\
\tilde{\mathcal J}(z_1,z_2,\tau)\equiv & \, \tau^{\frac{2A}{B}}\sum_{j=1}^2 \int\limits_0^{z_2} \partial_{z_i}\hat \lambda_i(\hat R(\tau)+  z_1,\hat \Theta(\tau)+ \varsigma_2,\tau) \,d\varsigma_2,\\
\tilde{\mathcal G}_1(z_1,z_2,\tau)\equiv & \, \tau^{\frac{3A}{B}} \left(\tilde \lambda_1(\hat R(\tau)+z_1,\hat \Theta(\tau)+z_2,\zeta(\tau),\tau)-\hat R'(\tau)\right), \\
\tilde{\mathcal G}_2(z_1,z_2,\tau)\equiv & \, \tau^{\frac{3A}{B}} \left(\tilde \lambda_2(\hat R(\tau)+ z_1,\hat \Theta(\tau)+ z_2,\zeta(\tau),\tau)-\hat \Theta'(\tau)\right).
	\end{split}
\end{gather*}
Note that 
\begin{gather}\label{estHJ2}
\begin{split}
\tilde{\mathcal H}(z_1,z_2,\tau)=&\frac{1}{2}\left(\chi_{2,A,0} z_1^2-\mathcal P'(\Theta_0)z_2^2\right) +\mathcal O(|{\bf z}|^3)+\mathcal O(\tau^{-\tilde\kappa})\mathcal O(|{\bf z}|^2),\\
\tilde{\mathcal J}(z_1,z_2,\tau)=&z_2 \left(J(\Theta_0) +\mathcal O(|{\bf z}|^3)+\mathcal O(\tau^{-\tilde\kappa})\right),
\end{split}
\end{gather}
and $\tilde{\mathcal G}_i(z_1,z_2,\tau) =  \mathcal O(1)$ as $\tau\to\infty$ and $|{\bf z}|\to 0$ with $\tilde\kappa=\min\{A,\beta/h\}/B>0$.
Consider a  function  
\begin{gather}\label{LFC2}
\mathcal W(z_1,z_2,\tau)\equiv \sqrt{\tilde{\mathcal V}(z_1,z_2,\tau)}, \quad \tilde{\mathcal V}(z_1,z_2,\tau)\equiv \tilde{\mathcal H}(z_1,z_2,\tau)+\tau^{-\frac{A}{B}} \frac{J(\Theta_0) z_1z_2}{2}.
\end{gather} 
The derivative of $ \mathcal W(z_1,z_2,\tau)$ along the trajectories of system \eqref{nHam2} is given by
\begin{gather}
\label{dVest2}
\begin{split}
\frac{d\mathcal W}{d\tau}\Big|_\eqref{nHam2} =\tau^{-\frac{2A}{B}}\frac{1}{2\sqrt{\tilde{\mathcal V}}}\left(\frac{ J(\Theta_0)}{2}\left(\chi_{2,A,0} z_1^2-\mathcal P'(\Theta_0)z_2^2\right)+\mathcal O(|{\bf z}|^3) +\mathcal O(\tau^{-\tilde\kappa})\mathcal O(|{\bf z}|)\right)
\end{split}
\end{gather}
as $\tau\to\infty$ and $|{\bf z}|\to 0$. It follows from \eqref{estHJ2}, \eqref{LFC2} and \eqref{dVest2} that there exist  $\tilde\tau_1\geq \tau_0$, $\tilde\Delta_1>0$,  $\mathcal W_\pm =\sqrt{\mathcal V_\pm}$, $\tilde D_1=|J(\Theta_0)|\mathcal W_-/(2\mathcal W_+)>0$ and $\tilde D_2>0$ such that 
\begin{gather*}
\mathcal W_- |{\bf z}| \leq \mathcal W(z_1,z_2,\tau)\leq \mathcal W_+|{\bf z}|, \quad 
\frac{d\mathcal W}{d\tau}\Big|_{\eqref{nHam2}}\geq \tau^{-\frac{2A}{B}}\left(\tilde D_1 \mathcal W-\tau^{-\tilde\kappa}\tilde D_2 \right)  
\end{gather*}
for all $\tau\geq \tilde\tau_1$ and $|{\bf z}|\leq \tilde\Delta_1$. Integrating the last inequality as $\tau\geq\tilde \tau_\ast$ with some $\tau_\ast \geq\tilde \tau_1$, we obtain
\begin{gather*}
\mathcal W(z_1(\tau),z_2(\tau),\tau)\geq e^{\tilde D(\tau)}\left(\mathcal W(z_1(\tilde\tau_\ast),z_2(\tilde\tau_\ast),\tilde\tau_\ast)e^{-\tilde D(\tilde\tau_\ast)}-\frac{\tilde D_2}{\tilde D_1}\int\limits_{\tilde\tau_\ast}^\tau \varsigma^{-\tilde\kappa}\tilde D'(\varsigma)e^{-\tilde D(\varsigma)}\,d\varsigma\right) 
\end{gather*}
 for solutions of system \eqref{nHam2} lying in $\{(z_1,z_1)\in\mathbb R^2:|{\bf z}|\leq \tilde\Delta_1\}$. Here, $\tilde D'(\tau)\equiv D_1\tau^{- {2A}/{B}}$. Hence, for all $\tilde \delta\in (0,\tilde \Delta_1)$ there exists $\tilde \tau_\ast= \max\{\tilde\tau_1,(2\tilde D_2/(\tilde \delta \mathcal W_-\tilde D_1))^{1/\tilde\kappa}\}$ such that $\mathcal W(z_1(\tau),z_2(\tau),\tau)\geq \tilde \delta e^{\tilde D(\tau)-\tilde D(\tilde \tau_\ast)}/2$ as $\tau\geq\tilde\tau_\ast$ for solutions of system \eqref{nHam2} with initial data $|{\bf z}(\tilde \tau_\ast)|=\tilde \delta$. Since $\tilde D(\tau)$ is strictly increasing, we see that there exists $\tilde\tau_e>\tilde \tau_\ast$ such that $|{\bf z}(\tilde \tau_e)|=\tilde\Delta_1$. Returning to the variables $(R,\Theta)$, we obtain the instability of the asymptotic regime corresponding to $\hat R(\tau)$, $\hat \Theta(\tau)$.

Finally, let $\mathcal P'(\Theta_0)>0$. In this case, the equilibrium $(0,0)$ of system \eqref{limzz} is a saddle. Consider 
$\mathcal U(z_1,z_2)\equiv z_1z_2$ as a Chetaev function candidate for system \eqref{nHam2}. It can easily be checked that
\begin{gather*}
\frac{d\mathcal U}{d\tau}\Big|_{\eqref{nHam2}}=\tau^{-\frac{A}{B}}\left(\chi_{2,A,0} z_1^2+\mathcal P'(\Theta_0)z_2^2 +\mathcal O(|{\bf z}|^3) +\mathcal O(\tau^{-\tilde\kappa})\mathcal O(|{\bf z}|)\right)
\end{gather*}
as $\tau\to\infty$ and $|{\bf z}|\to 0$. Hence, there exists $\hat \tau_1>\tau_0$ and $\hat D_0>0$ such that 
\begin{gather*}
\frac{d\mathcal U}{d\tau}\Big|_{\eqref{nHam2}}\geq \tau^{-\frac{A}{B}}\left(\mathcal V_- |{\bf z}|^2-\tau^{-\tilde\kappa}\hat D_0 |{\bf z}|\right)
\end{gather*}
for all $\tau\geq \hat\tau_1$ and $|{\bf z}|\leq \hat\Delta_1$. Hence, for all $\hat \delta>0$ there exists $\hat \tau_\ast=\max\{\hat \tau_1, (2\hat D_0/(\hat\delta\mathcal V_-))^{1/\tilde\kappa}\}$ such that $d\mathcal U/d\tau\geq \tau^{-A/B} \mathcal V_- \hat\delta^2 /2>0$ for all $\hat\delta\leq |{\bf z}|\leq \hat\Delta_1$ and $\tau\geq \hat\tau_\ast$. Integrating the last inequality yields
\begin{gather}\label{Uineq}
\mathcal U(z_1(\tau),z_2(\tau))\geq \mathcal U(z_1(\tilde \tau_\ast),z_2(\tilde \tau_\ast))+\frac{\hat\delta^2 B \mathcal V_- }{2(B-A)} \left(\tau^{1-\frac{A}{B}}-\hat\tau_\ast^{1-\frac{A}{B}}\right)
\end{gather} 
for solutions of system \eqref{nHam2} lying in $
\mathfrak D:=\{(z_1,z_2)\in\mathbb R^2:\hat\delta\leq |{\bf z}|\leq \hat\Delta_1\}$. Consider a solution of system \eqref{nHam2} with initial data $\mathcal U(z_1(\tilde\tau_\ast),z_2(\tilde\tau_\ast))=\tilde\delta^2>0$. Then, the inequality \eqref{Uineq} shows that the solution cannot stay forever in $\{(z_1,z_2)\in\mathfrak D: z_1z_2>0\}$ because $\mathcal U(z_1,z_2)$ is bounded on $\mathfrak D$. Since $\mathcal U(z_1,z_2)\leq |{\bf z}|^2/2$ for all $(z_1,z_2)\in\mathbb R^2$, it follows that there exists $\hat \tau_e>\hat\tau_\ast$ such that $|{\bf z}(\hat\tau_e)|=\hat\Delta_1$. 
\end{proof} 

\begin{proof}[Proof of Lemma~\ref{LTh3}]
It follows from \eqref{asthetanot} that there are positive parameters $D_1$, $D_2$, $D_3$ and $\tau_1\geq \tilde \tau_0$ such that
\begin{gather*}
\left|\frac{dR}{d\tau}\right|\geq \tau^{-\frac{A}{B}} D_1, \quad 
\left|\frac{d\Theta}{d\tau}\right|\geq \tau^{-\frac{A}{B}} \left(D_2|R|-D_3\right)
\end{gather*}
for all $|R|\leq R_0$, $\Theta\in\mathbb R$ and $\tau\geq \tau_1$. It follows that 
\begin{align*}
&|R(\tau)-R(\tau_1)|\geq \tilde D_1 \left(\tau^{\frac{B-A}{B}}-\tau_1^{ \frac{B-A}{B}}\right), \\
&|\Theta(\tau)-\Theta(\tau_1)|\geq \tilde D_2 \left(\tau^{ \frac{2(B-A)}{B} }-\tau_1^{\frac{2(B-A)}{B}}\right)-\tilde D_3 \left(\tau^{ \frac{B-A}{B}}-\tau_1^{ \frac{B-A}{B}}\right)
\end{align*}
as $\tau\geq \tau_1$, where $\tilde D_1=B D_1/(B-A)>0$, $\tilde D_2=B \tilde D_1 D_2 /(2(B-A))>0$ and $\tilde D_3=B(D_3+|R(\tau_1)|-\tilde D_1 D_2 \tau_1^{1-A/B})/(B-A)$. Thus, the solutions of system \eqref{det1} grow unbounded. Hence, there exist $\tau_e>\tau_1$ such that $|R(\tau_e)|+|\Theta(\tau_e)|>\Delta$.
\end{proof}

\section{Stochastic stability of the resonance}
\label{Sec7}
\begin{proof}[Proof of Theorem~\ref{Th2}]
Substituting 
\begin{gather*}
R(\tau)=R_\ast(\tau)+z_1(\tau), \quad 
\Theta(\tau)=\Theta_\ast(\tau)+z_1(\tau)
\end{gather*}
into \eqref{veq}, we obtain
\begin{gather}\label{nHamS}
	\begin{split}
		dz_1= &  -  \tau^{-\frac{A}{B}}\partial_{z_2}  \mathcal H (z_1,z_2,\tau)\,d\tau + \tau^{-\frac{C-A}{B}}\mu \mathcal E_1(z_1,z_2,\tau)\,d\tilde w(\tau),\\
		dz_2= &     \left(
		\tau^{-\frac{A}{B}}\partial_{z_1}  \mathcal H (z_1,z_2,\tau)+
		\tau^{-\frac{2A}{B}}\mathcal J (z_1,z_2,\tau)\right) d\tau + \tau^{-\frac{C}{B}}\mu \mathcal E_2(z_1,z_2,\tau)\,d\tilde w(\tau),
	\end{split}
\end{gather}
where
\begin{gather*}
	\begin{split}
\mathcal H(z_1,z_2,\tau)\equiv & \, \int\limits_0^{z_1}\Big( \Lambda_2(R_\ast(\tau)+\varsigma_1,\Theta_\ast(\tau),\zeta(\tau),\tau)-\Lambda_2(R_\ast(\tau),\Theta_\ast(\tau),\zeta(\tau),\tau)\Big)\,d\varsigma_1 \\ & -    \int\limits_0^{z_2} \Big(\Lambda_1(R_\ast(\tau)+z_1,\Theta_\ast(\tau)+\varsigma_2,\zeta(\tau),\tau)-\Lambda_1(R_\ast(\tau),\Theta_\ast(\tau),\zeta(\tau),\tau)\Big)\,d\varsigma_2,\\
\mathcal J(z_1,z_2,\tau)\equiv & \, \sum_{j=1}^2 \int\limits_0^{z_2} \partial_{z_i}\Lambda_i(R_\ast(\tau)+ z_1,\Theta_\ast(\tau)+ \varsigma_2,\zeta(\tau),\tau) \,d\varsigma_2,\\
 \mathcal E_i(z_1,z_2,\tau)\equiv & \, \eta_i(R_\ast(\tau)+z_1,\Theta_\ast(\tau)+z_2,\zeta(\tau),\tau).
	\end{split}
\end{gather*}
Note that $\mathcal H(z_1,z_2,\tau)\equiv 0$, $\partial_{z_i}\mathcal H(z_1,z_2,\tau)\equiv 0$ and $\mathcal J(0,0,\tau)\equiv 0$. It follows from \eqref{lambda12} and \eqref{asthetachi} that
\begin{gather*}
\begin{split}
\mathcal H(z_1,z_2,\tau)=&\,\frac{1}{2}\left(\chi_{2,A,0} z_1^2-\mathcal P'(\Theta_0)z_2^2\right)+\mathcal O(|{\bf z}|^3) +\mathcal O(\tau^{-\kappa})\mathcal O(|{\bf z}|^2),\\
\mathcal J(z_1,z_2,\tau)=&\,z_2 \left(J(\Theta_0) +\mathcal O(\tau^{-\kappa})\right), \\
\mathcal E_i(z_1,z_2,\tau) = &\, \mathcal O(1)
\end{split}
\end{gather*}
as $\tau\to\infty$ and $|{\bf z}| \to 0$ with $\kappa=\min\{A,\beta/h\}/B>0$. If $\mu=0$, system \eqref{nHamS} has an equilibrium point at the origin. Let us prove the stability of the equilibrium with respect to white noise perturbations by constructing the stochastic Lyapunov function~\cite{HJK67,RH12}. 

The generator of the process defined by \eqref{nHamS} has the  form $\mathfrak L\equiv  \mathfrak L_0+\mu^2 \mathfrak L_1$, where
\begin{align*}
\mathfrak L_0:=& \partial_\tau-\tau^{-\frac{A}{B}}\partial_{z_2}\mathcal H \partial_{z_1}+\left( \tau^{-\frac{A}{B}}\partial_{z_1}\mathcal H+\tau^{-\frac{2A}{B}}\mathcal J\right)\partial_{z_2},\\
\mathfrak L_1:=& \frac{1}{2}\tau^{-\frac{2(C-A)}{B}}\left( \mathcal E_1^2\partial_{z_1}^2+2\tau^{-\frac{ A}{B}}\mathcal E_1 \mathcal E_2\partial_{z_1}\partial_{z_2}+\tau^{-\frac{2A}{B}}\mathcal E_2^2\partial_{z_2}^2\right).
\end{align*}
Consider the auxiliary function
\begin{gather*}
\mathcal V_0(z_1,z_2,\tau)\equiv \mathcal H(z_1,z_2,\tau)+\tau^{-\frac{A}{B}} \frac{J(\Theta_0) z_1z_2}{2}.
\end{gather*}
It can easily be checked that
\begin{align*}
&\mathfrak L_0 \mathcal V_0(z_1,z_2,\tau) =\tau^{-\frac{2A}{B}}\left(\frac{J(\Theta_0)}{2}\left(\chi_{2,A,0} z_1^2-\mathcal P'(\Theta_0)z_2^2\right) +\mathcal O(|{\bf z}|^3)+\mathcal O(\tau^{-\kappa})\mathcal O(|{\bf z}|^2)\right), \\
&\mathfrak L_1 \mathcal V_0(z_1,z_2,\tau) = \mathcal O(\tau^{-\frac{2(C-A)}{B}})
\end{align*}
as $\tau\to\infty$ and $|{\bf z}|\to 0$. Hence, there exist 
$\tau_1>\tau_0$, 
$r_\ast>0$, 
$\mathcal V_+=\max\{\chi_{2,A,0},|\mathcal P'(\Theta_0)|\}>0$, 
$\mathcal V_-=\min\{\chi_{2,A,0},|\mathcal P'(\Theta_0)|\}>0$, 
$D_0=|J|\mathcal V_->0$ and $D_1>0$ such that
\begin{gather}\label{V0ests}
\begin{split}
&\mathcal V_- |{\bf z}|^2\leq \mathcal V_0(z_1,z_2,\tau) \leq \mathcal V_+ |{\bf z}|^2, \\
&\mathfrak L \mathcal V_0(z_1,z_2,\tau)\leq - \tau^{-\frac{2A}{B}} D_0 |{\bf z}|^2 + \mu^2 D_1 \tau^{-\frac{2(C-A)}{B}} 
\end{split}
\end{gather}
for all $\tau\geq \tau_1$ and $|{\bf z}|\leq r_\ast$.

Fix the parameters $\varepsilon_1\in (0,r_\ast)$, $\varepsilon_2\in (0,1)$ and $\tau_\ast\geq \tau_1$. Consider the stochastic Lyapunov function candidate for system \eqref{nHamS} in the form
\begin{gather}\label{SLF1}
\mathcal V(z_1,z_2,\tau)\equiv \mathcal V_0(z_1,z_2,\tau)+\mu^2 \mathcal V_1(\tau),
\end{gather}
where
\begin{gather*}
\mathcal V_1(\tau)\equiv 
\begin{cases}
\displaystyle D_1 \tau_\ast^{-\frac{2(C-A)}{B}}(\mathcal T+\tau_\ast-\tau), &\text{if}\quad C<A+{B}/{2},\\ 
\displaystyle D_1 \log\left(\frac{\mathcal T+\tau_\ast}{\tau}\right), & \text{if}\quad  C =A+{B}/{2},\\ 
\displaystyle D_1 \int\limits_\tau^{\tau_\ast+\mathcal T}\varsigma^{-\frac{2(C-A)}{B}}\,d\varsigma, & \text{if}\quad    C > A+{B}/{2} 
\end{cases}
\end{gather*}
with some $\mathcal T>0$. It can easily be checked that 
\begin{gather}
\label{V1est}
\mathcal V(z_1,z_2,\tau)\geq \mathcal V_0(z_1,z_2,\tau)\geq 0, \quad 
\mathfrak L \mathcal V(z_1,z_2,\tau)\leq - \tau^{-\frac{2A}{B}} D_0 |{\bf z}|^2\leq 0
\end{gather}
 for all $({\bf z},\tau)\in \mathfrak D(r_\ast,\tau_\ast,\mathcal T):=\{({\bf z},\tau)\in\mathbb R^3: |{\bf z}|\leq r_\ast, 0\leq \tau-\tau_\ast\leq \mathcal T\}$. 
 Let ${\bf z }(t)$ be a solution of system (1) with initial data $|{\bf z}(\tau_\ast)|\leq  \delta_1$ and $\mathfrak T_{\varepsilon}$ be the first exit time of $({\bf z}(\tau), \tau)$ from the domain $\mathfrak D(\varepsilon,\tau_\ast,\mathcal T)$ with some $\delta_1\in (0,\varepsilon_1)$. Define the function $\mathfrak T_{\varepsilon_1,\tau}\equiv \min\{\mathfrak T_{\varepsilon_1}, \tau\}$. Then, ${\bf z}(\mathfrak T_{\varepsilon_1,\tau})$ is the process stopped at the first exit time from the domain $\mathfrak D(\varepsilon_1,\tau_\ast,\mathcal T)$. It follows from \eqref{V1est} that $\mathcal V(z_1(\mathfrak T_{\varepsilon_1,\tau}), z_2(\mathfrak T_{\varepsilon_1,\tau}),\mathfrak T_{\varepsilon_1,\tau})$ is a nonnegative supermartingale (see, for example,~\cite[\S 5.2]{RH12}). In this
case, the following estimates hold:
\begin{align*}
\mathbb P\left(\sup_{0\leq \tau-\tau_\ast\leq \mathcal T} |{\bf z}(\tau)|\geq \varepsilon_1\right) & = 
\mathbb P\left(\sup_{\tau\geq \tau_\ast} |{\bf z}(\mathfrak T_{\varepsilon_1,\tau})|\geq \varepsilon_1\right)  \\
& \leq \mathbb P\left(\sup_{\tau\geq \tau_\ast} \mathcal V(z_1(\mathfrak T_{\varepsilon_1,\tau}), z_2(\mathfrak T_{\varepsilon_1,\tau}),\mathfrak T_{\varepsilon_1,\tau})\geq \mathcal V_-\varepsilon_1^2\right) \\
&\leq 
\frac{ \mathcal V(z_1(\tau_\ast),z_2(\tau_\ast),\tau_\ast)}{\mathcal V_-\varepsilon_1^2}.
\end{align*}
The last estimate follows from Doob's inequality for supermartingales. It follows from \eqref{V0ests} and \eqref{SLF1} that $\mathcal V(z_1(\tau_\ast),z_2(\tau_\ast),\tau_\ast)\leq \mathcal V_+ \delta_1^2+\mu^{2(1-\epsilon)} \delta_2^{2\epsilon} \mathcal V_1(\tau_\ast)$ with some $\epsilon\in(0,1)$.
Hence, taking
\begin{align*}
&\delta_1=\varepsilon_1\sqrt{ \frac{\varepsilon_2 \mathcal V_-}{2\mathcal V_+}}, \\ 
&\delta_2= 
\begin{cases}
\left(\delta_1^2 \mathcal V_+  D_1^{-1}\right)^{\frac{1}{2\epsilon}} \tau_\ast^{-\frac{B-2(C-A)}{2\epsilon B}}, &  \text{if}\quad C \leq A+{B}/{2},\\
\left( \delta_1^2\mathcal V_+ D_1^{-1} \right)^{\frac{1}{2}} \tau_\ast^{\frac{2(C-A)-B}{2B}}, & \text{if}\quad C > A+{B}/{2}.
\end{cases} 
\\
&\mathcal T=
\begin{cases}
 \tau_\ast\left(\mu^{-2(1-\epsilon)}-1\right), & \text{if}\quad  C < A+{B}/{2},\\
\tau_\ast \left(\exp{ \mu^{-2(1-\epsilon)}}-1\right), & \text{if}\quad C  = A+{B}/{2},\\
\infty, & \text{if}\quad  C  > A+{B}/{2}.
\end{cases}
\end{align*}
yields
\begin{gather*}
\mathbb P\left(\sup_{0\leq \tau-\tau_\ast\leq \mathcal T} |{\bf z}(\tau)|\geq \varepsilon_1\right) \leq \varepsilon_2.
\end{gather*}
Returning to the variables $\rho(t)$, $\varphi(t)$, we obtain \eqref{Th2est} with $t_\ast=(B\tau_\ast)^{1/B}$ and $\tilde{\mathcal T}=(B \mathcal T+t_\ast^B)^{1/B}-t_\ast$.
\end{proof}
 
\section{Examples}
\label{Sex}
Consider again equation \eqref{Ex}. In this case, we have
\begin{gather*}
h=1,\quad l=m=0,\quad a=p-2,\quad b=n-2,\\
\mathcal M_1=-\alpha+(p-2)\beta, \quad \mathcal M_2=2(-\gamma+(n-2)\beta).
\end{gather*}
It follows from~\cite{IKetal16} that the solution of system \eqref{Xi0} has the form
\begin{gather*}
X_{1,0}(\varphi)\equiv \sqrt 2 {\hbox{\rm cn}}\left(\frac{T_0\varphi}{\pi\sqrt 2};\frac{1}{2}\right), \quad 
X_{2,0}(\varphi)\equiv \nu_0\partial_\varphi X_{1,0}(\varphi)
\end{gather*}
with $T_0=2\sqrt{2}K(1/2)$ and $\nu_0=2\pi/T_0$, where $K(k)$ is a complete elliptic integral of the first kind and ${\hbox{\rm cn}}(\phi;k)$ is the Jacobi elliptic function. Moreover, it follows from~\cite[\S 63]{NIA90} that $2\pi$-periodic function $X_{1,0}(\varphi)$ admits the Fourier expansion
\begin{gather*}
X_{1,0}(\varphi)=\sum_{k=1}^\infty q_k \cos\big((2k-1) \varphi\big), \quad q_k=2 \nu_0\sqrt 2\,{\hbox {\rm sech}}\left((2k-1)\frac{\pi}{2}\right).
\end{gather*}
The conditions \eqref{ncond} and \eqref{adas} correspond to  the inequalities
\begin{gather}\label{ineqex}
2-\frac{1}{\beta}\leq p-\frac{\alpha}{\beta} <3, \quad 
n-\frac{\gamma}{\beta}\leq \frac{3}{4}\left(p-\frac{\alpha}{\beta}+\frac{1}{3}\right).
\end{gather}

{\bf 1}. Let $p=n=0$, $\alpha=\beta=1/3$, and $\gamma=1/6$. In this case, inequalities \eqref{ineqex} are satisfied and system \eqref{Ex} takes the form
\begin{gather}\label{Ex1}
dx_1=x_2\,dt, \quad dx_2=\left(x_1-x_1^3+t^{-\frac{1}{3}}  \mathcal Q  \cos \left(s t^{\frac{4}{3}}\right)\right) dt+ t^{-\frac{1}{6}} \mu   \cos\left(s t^{\frac{4}{3}}\right)\, dw(t).
\end{gather}
From \eqref{ABC}, \eqref{Pdef} and \eqref{Jdef} it follows that $a=b=-2$, $A=2/3$, $B=4/3$, $C=1$, and
\begin{align*}
\mathcal P(\Theta) & \equiv \frac{1}{2\sqrt 3}\Big( \frac{3}{z_0^2} \langle  f_{1,0}(\varkappa^{-1}\zeta+\Theta,\zeta) \rangle_{\varkappa\zeta}-1\Big),\\
J(\Theta) &  \equiv \frac{3}{ 4 z_0^2}\left\langle \partial_\Theta f_{2,0}(\varkappa^{-1}\zeta+\Theta,\zeta)  -f_{1,0}(\varkappa^{-1}\zeta+\Theta,\zeta) \right\rangle_{\varkappa\zeta}+\frac{1}{4},
\end{align*}
where
\begin{gather*}
f_{1,0}(\varphi,S)\equiv \frac{ \mathcal Q \nu_0}{4} \partial_\varphi X_{1,0}(\varphi) \cos S , \quad 
f_{2,0}(\varphi,S)\equiv -\frac{ \mathcal Q \nu_0}{4} X_{1,0}(\varphi)  \cos S, \quad 
z_0=\frac{4 s}{3 \nu_0\varkappa }.
\end{gather*}
Note that 
\begin{gather*}
\langle Z(\varkappa^{-1}\zeta+\Theta)\cos\zeta\rangle_{\varkappa \zeta}\equiv
\langle Z(\zeta)\cos\varkappa\zeta\rangle_{\zeta}\cos\varkappa\Theta +
\langle Z(\zeta)\sin\varkappa\zeta\rangle_{\zeta}\sin\varkappa\Theta 
\end{gather*}
for any $2\pi$-periodic function $Z(\varphi)$. In the case of a  resonance with $\varkappa=2k-1$, $k\in\mathbb Z_+$, we have
\begin{align*}
\mathcal P(\Theta)&\equiv -\frac{1}{2\sqrt 3}\left(\frac{\mathcal Q }{  \mathcal Q_\varkappa s^2} \sin \big((2k-1)\Theta\big)+1\right), \\ 
J(\Theta)&\equiv \frac{  \mathcal Q }{2  \mathcal Q_\varkappa s^2 }  \sin\big((2k-1)\Theta\big)+\frac{1}{4},\\
\mathcal Q_\varkappa&=\frac{128}{(3\nu_0 (2k-1))^3q_k}.
\end{align*} 
Therefore, if $|\mathcal Q|/s^2> \mathcal  Q_\varkappa$, then the condition \eqref{astheta} holds with
\begin{gather*}
\Theta_0\in\left \{(-1)^j\theta_0+\frac{\pi j}{2k-1}\right\}_{ j\in\mathbb Z}, \quad 
\theta_0=\frac{1}{2k-1}\arcsin\left(-\frac{ \mathcal Q_\varkappa s^2}{\mathcal Q}\right).
\end{gather*}
It can easily be checked that for all $j\in\mathbb Z$
\begin{gather*}
\mathcal P'(\Theta_0)=-\frac{(-1)^j\mathcal Q  (2k-1) }{2\sqrt 3 \mathcal Q_\varkappa s^2}\cos\big((2k-1)\theta_0\big)\neq 0, \quad J(\Theta_0)=-\frac{1}{4}.
\end{gather*}
Thus, if $|\mathcal Q|/s^{2}>\mathcal Q_\varkappa$ and $(-1)^j \mathcal Q > 0$, then it follows from Lemma~\ref{LTh2} and Theorem~\ref{Th2} that a phase-locking regime occurs in system \eqref{Ex1} such that
\begin{align*}
\rho(t)& \approx  \frac{4 s t^\frac{1}{3}}{3 \nu_0 (2k-1) }, \\ 
\varphi(t)& \approx  \frac{1}{2k-1}\left(S(t)+(-1)^j\arcsin\left(-\frac{\mathcal Q_\varkappa s^2}{\mathcal Q}\right)+ \pi j\right)
\end{align*}
for all $k\in\mathbb Z_+$, where $\rho(t)\equiv [H(x_1(t),x_2(t))]^{1/4}$ and $\varphi(t)\equiv Y_2(x(t),\dot x(t))$ correspond to the amplitude and the phase of solutions to system \eqref{Ex1}.
Moreover, since $C<A+B/2$, we see that this regime is stochastically stable on an asymptotically large time interval $t\leq \mathcal O (\mu^{-1+\epsilon})$ as $\mu\to 0$ for all $\epsilon\in(0,1)$ (see~Fig.~\ref{fex1}). Note that if $|\mathcal Q|/s^2>\mathcal Q_\varkappa$ and $(-1)^j \mathcal Q<0$, this regime is unstable in the corresponding truncated system. Finally, it follows from Lemma~\ref{LTh3} that the capture into resonance does not occur if $|\mathcal Q|/s^2<\mathcal Q_\varkappa$. 

\begin{figure}
\centering
 \includegraphics[width=0.4\linewidth]{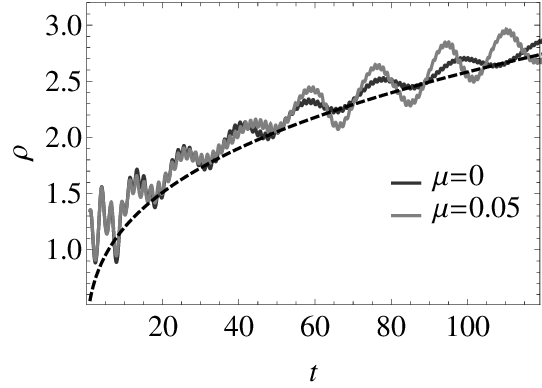}
\hspace{1ex}
  \includegraphics[width=0.4\linewidth]{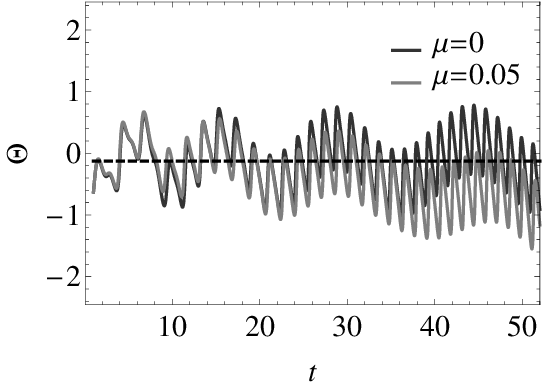}
\caption{\small The evolution of $\rho(t)$ and $\Theta(t)=\varphi(t)-\varkappa^{-1}S(t)$ for sample paths of the resonant solutions $(\varkappa=1)$ to system \eqref{Ex1} with $\mathcal Q=4$ and $s=1/2$. The black dashed curves correspond to $\rho(t)\equiv z_0 t^{1/3}$ and $\Theta(t)\equiv \theta_0$, where $z_0\approx 0.56$, $\theta_0\approx -0.127$. In this case, $\mathcal Q_1\approx 2.04$.} \label{fex1}
\end{figure}

{\bf 2}. Now let $p=n=1$, $\alpha=\beta=1/2$, and $\gamma=3/4$.
Then, system \eqref{Ex} has the following form:
\begin{gather}\label{Ex2}
dx_1=x_2\,dt, \quad dx_2=\left(x_1-x_1^3+t^{-\frac{1}{2}}  \mathcal Q  x_1 \cos \left(s t^{\frac{3}{2}}\right)\right) dt+ t^{-\frac{3}{4}} \mu  x_1 \cos \left(s t^{\frac{3}{2}}\right)\, dw(t).
\end{gather}
We see that the inequalities \eqref{ineqex} are satisfied. In this case, $a=b=-1$, $A=3/4$, $B=3/2$, $C=3/2$,
\begin{align*}
\mathcal P(\Theta) & \equiv \frac{1}{\sqrt 6}\left( \frac{2}{z_0} \langle  f_{1,0}(\varkappa^{-1}\zeta+\Theta,\zeta) \rangle_{\varkappa\zeta}-1\right),\\
J(\Theta) &  \equiv \frac{2}{ 3 z_0}\left\langle \partial_\Theta f_{2,0}(\varkappa^{-1}\zeta+\Theta,\zeta) \right\rangle_{\varkappa\zeta}+\frac{1}{6},
\end{align*}
where
\begin{gather*}
f_{1,0}(\varphi,S)\equiv \frac{ \mathcal Q \nu_0}{8} \partial_\varphi(X_{1,0}(\varphi))^2 \cos S , \quad 
f_{2,0}(\varphi,S)\equiv -\frac{ \mathcal Q \nu_0}{4} (X_{1,0}(\varphi))^2  \cos S, \quad 
z_0=\frac{3s}{2\nu_0\varkappa}.
\end{gather*}
 In the case of a resonance with $\varkappa=2k$, $k\in\mathbb Z_+$, we have
\begin{gather}\label{PJ}
\mathcal P(\Theta)\equiv -\frac{1}{\sqrt 6}\left(\frac{\mathcal Q }{s \mathcal Q_k } \sin (2k\Theta)+1\right), \quad 
J(\Theta)\equiv \frac{ 2 \mathcal Q }{ s\mathcal Q_\varkappa}  \sin(2k\Theta)+\frac{1}{6},
\quad 
\mathcal Q_\varkappa=\frac{3}{2(\nu_0 k)^2 \tilde q_k},
\end{gather} 
where $\tilde q_k=\langle (X_{1,0}(\zeta))^2\cos (2k\zeta)\rangle_{\zeta}$.
It follows that if $|\mathcal Q|/s> \mathcal  Q_\varkappa$, then the condition \eqref{astheta} holds with
\begin{gather*}
\Theta_0\in\left \{(-1)^j\theta_0+\frac{\pi j}{2k}\right\}_{ j\in\mathbb Z}, \quad 
\theta_0=\frac{1}{2k}\arcsin\left(-\frac{ s \mathcal Q_\varkappa  }{\mathcal Q}\right).
\end{gather*}
It can easily be checked that for all $j\in\mathbb Z$
\begin{gather*}
\mathcal P'(\Theta_0)=-\frac{(-1)^j2k \mathcal Q }{\sqrt 6  s \mathcal Q_\varkappa }\cos(2k\theta_0)\neq 0, \quad J(\Theta_0)=-\frac{1}{2}.
\end{gather*}
It follows from Lemma~\ref{LTh2} and Theorem~\ref{Th2} that if $|\mathcal Q|/s>\mathcal Q_\varkappa$ and $(-1)^j \mathcal Q > 0$, then a phase-locking regime occurs in system \eqref{Ex2} such that
\begin{gather*}
\rho(t)\approx   \frac{3s t^{\frac 12}}{4\nu_0 k}  , \quad \varphi(t)\approx  \frac{1}{2k}\left(S(t)+(-1)^j\arcsin\left(-\frac{s\mathcal Q_\varkappa}{\mathcal Q}\right)+\pi j\right), \quad k\in\mathbb Z_+.
\end{gather*}
Moreover, since $C=A+B/2$, we see that this mode is stochastically stable on the exponentially long time interval $t\leq \mathcal O (\exp \mu^{-4(1-\epsilon)/3})$ as $\mu\to 0$ for all $\epsilon\in(0,1)$ (see~Fig.~\ref{fex2}). If $|\mathcal Q|/s>\mathcal Q_\varkappa$ and $(-1)^j \mathcal Q<0$, this regime is unstable. The capture into resonance does not occur if $|\mathcal Q|/s<\mathcal Q_\varkappa$. 
\begin{figure}
\centering
 \includegraphics[width=0.4\linewidth]{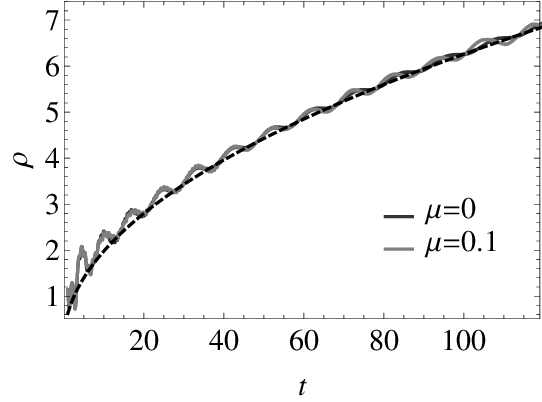}
\hspace{1ex}
  \includegraphics[width=0.4\linewidth]{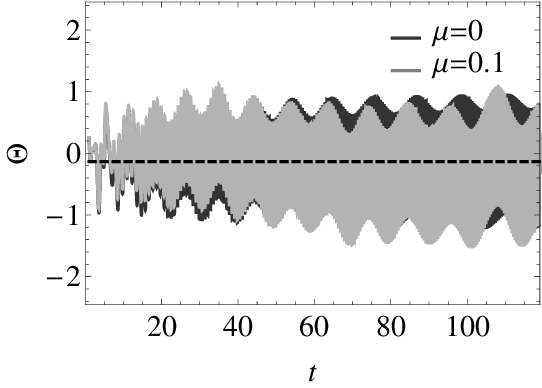}
\caption{\small The evolution of $\rho(t)$ and $\Theta(t)=\varphi(t)-\varkappa^{-1}S(t)$ for sample paths of the resonant solutions $(\varkappa=2)$ to system \eqref{Ex2} with $\mathcal Q=8$ and $s=1$. The black dashed curves correspond to $\rho(t)\equiv z_0 t^{1/2}$ and $\Theta(t)\equiv \theta_0$, where $z_0\approx 0.63$, $\theta_0\approx -0.132$. In this case, $\mathcal Q_2\approx 2.101$.} \label{fex2}
\end{figure}

{\bf 3}. Finally, let $p=1$, $n=2$, $\alpha=\beta=1/2$, and $\gamma=3/2$.
Then, system \eqref{Ex} takes the form
\begin{gather}\label{Ex3}
dx_1=x_2\,dt, \quad dx_2=\left(x_1-x_1^3+t^{-\frac{1}{2}}  \mathcal Q  x_1 \cos \left(s t^{\frac{3}{2}}\right)\right) dt+ t^{-\frac{3}{2}} \mu  x_1^2 \cos \left(s t^{\frac{3}{2}}\right)\, dw(t).
\end{gather}
It can easily be checked that the inequalities \eqref{ineqex} hold. It follows from \eqref{ABC}, \eqref{Pdef} and \eqref{Jdef} that $a=b=0$, $A=3/4$, $B=3/2$, $C=7/4$. Note that in the case of $\varkappa=2k$, $k\in\mathbb Z_+$, the functions $\mathcal P(\Theta)$ and $J(\Theta)$ have the form \eqref{PJ}. Thus, the results obtained for the previous example are valid.  Moreover, since $C>A+B/2$, we obtain the stochastic stability of the resonance on an semi-infinite time interval if $|\mathcal Q|>\mathcal Q_\varkappa$ and $(-1)^j \mathcal Q>0$ (see~Fig.~\ref{fex3}).   

\begin{figure}
\centering
 \includegraphics[width=0.4\linewidth]{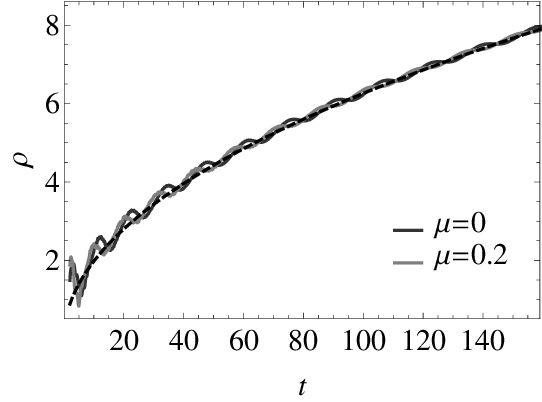}
\hspace{1ex}
  \includegraphics[width=0.4\linewidth]{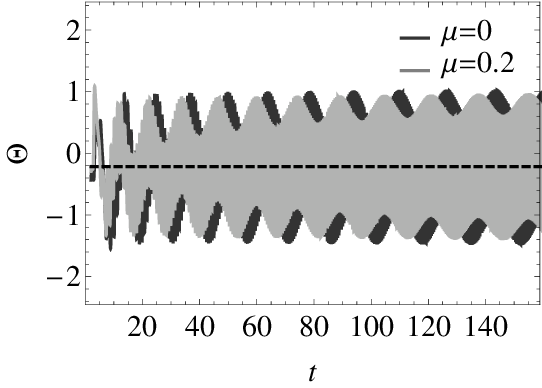}
\caption{\small The evolution of $\rho(t)$ and $\Theta(t)=\varphi(t)-\varkappa^{-1}S(t)$ for sample paths of the resonant solutions $(\varkappa=2)$ to system \eqref{Ex3} with $\mathcal Q=5$ and $s=1$. The black dashed curves correspond to $\rho(t)\equiv z_0 t^{1/2}$ and $\Theta(t)\equiv \theta_0$, where $z_0\approx 0.63$, $\theta_0\approx -0.132$. In this case, $\mathcal Q_2\approx 2.101$.} \label{fex3}
\end{figure}

\section{Conclusion}
Thus, the combined influence of a decaying chirped-frequency driving and a stochastic perturbation on the class of nonlinear systems far from the equilibrium have been investigated. Using a modified averaging method, we have derived the model truncated system \eqref{det1}, describing a possible dynamics in the perturbed system \eqref{ps}. We have shown that the model system has at least two regimes: phase locking and phase drifting. The resonant solutions with unlimitedly growing amplitude arise in the phase-locking mode. We have described the conditions that guarantee the persistent of such solutions in the full stochastic system on asymptotically large time intervals.  

Note that the proposed technique cannot be applied directly to the case of more complicated limiting systems than \eqref{limsys}. The same is true for multidimensional systems with chirped-frequency perturbations due to the small denominators that may appear when averaging the equations. These cases deserve special attention and will be discussed elsewhere.

\section*{Acknowledgments}
The work is supported by the Russian Science Foundation (project No. 19-71-30002).

\end{document}